\numberwithin{equation}{section}
\newtheorem{theorem}{\textbf{Theorem}}[section]
\newtheorem{proposition}[theorem]{\textbf{Proposition}}
\newtheorem{lemma}[theorem]{\textbf{Lemma}}
\newtheorem{corollary}[theorem]{Corollary}
\theoremstyle{definition}
\theoremstyle{remark}
\newtheorem{remark}[theorem]{\it{Remark}}
\def\Rn{{\mathbb{R}}^n_+}
\def\d{\partial}
\def\ba{\begin{align}}
\def\ba{\begin{align}}
\def\ea{\end{align}}
\def\bp{\begin{proof}}
\def\ep{\end{proof}}
\newcommand{\R}{\mathbb{R}}
\newcommand{\N}{\mathbb{N}}
\renewcommand{\(}{\left(}
\renewcommand{\)}{\right)}
\begin{document}
	
\title{Energy bounds of sign-changing solutions to Yamabe equations on manifolds with boundary}
\date{}

\author{\textsc{S\'ergio Almaraz}\footnote{Partially supported by FAPERJ-202.802/2019.} \textsc{and Shaodong Wang}\footnote{Partially supported by NSFC-12001364, NSFC-12031012 and the Institute of Modern Analysis-A
Frontier Research Center of Shanghai.}}

\maketitle

\begin{abstract}
We study the Yamabe equation in the Euclidean half-space. We prove that any sign-changing solution has at least twice the energy of a standard bubble. Moreover, a sharper energy lower bound of the sign-changing solution set is also established via the method of moving planes. This bound increases the energy range for which Palais-Smale sequences of related variational problem has a non-trivial weak limit.
\end{abstract}
\section{Introduction}
In this paper, we study the following semi-linear equation with Neumann-type boundary condition in the $n$-dimensional Euclidean half-space $ \R_+^n:=\{(x_1,...,x_n)\in\mathbb R^n\:|\:x_n\geqslant 0\}$, $n\geqslant 3$:
\begin{align}\label{eq:r+n}
\begin{cases}
\Delta u=0 \ & \text{in} \ \R^n_+\\
\frac{\partial u}{\partial \nu}+|u|^{\frac{2}{n-2}}u=0\ & \text{on} \ \partial \R_+^n,
\end{cases}
\end{align}
where $\nu$ is the inward-pointing normal vector. We are interested in the set of sign-changing solutions to equation \eqref{eq:r+n}. 

Equation~\eqref{eq:r+n} is the boundary version of the classical Yamabe problem, in the scalar-flat case. Known as the generalization of the famous uniformization theorem for Riemann surfaces, the Yamabe problem concerns the existence of conformal metrics with constant scalar curvature on a given smooth compact Riemannian manifold $(M,g)$ without boundary. If one writes the conformal change in the form $\tilde{g}=u^{\frac{4}{n-2}}g$ for some smooth positive function $u$, the problem is equivalent to proving the existence of positive solutions to the following equation:
\begin{equation}\label{eq:yp}
\Delta_gu-\frac{n-2}{4(n-1)}R_gu+K|u|^{\frac{4}{n-2}}u=0  \ \text{in} \ M,
\end{equation}
where $\Delta_g:=div_g\nabla$ is the Laplace-Beltrami operator, $R_g$ denotes the scalar curvature of $(M,g)$ and $K$ here corresponds to the prescribed constant scalar curvature of the conformal metric $\tilde{g}$. Existence of positive solutions to ~\eqref{eq:yp} has been obtained by the combined work of Aubin~\cite{aub}, Schoen~\cite{sch}, Trudinger~\cite{tru} and Yamabe~\cite{yam}. The study of multiplicity of positive solutions has also drawn wide attention ever since it was brought up by Schoen in his Stanford lectures in 1988. He asked if the set of positive solutions to \eqref{eq:yp} is compact in the sense that it is bounded in the $C^{2,\alpha}$-topology for some $0<\alpha<1$. This problem has been studied extensively by many mathematicians and was finally solved by Khuri, Marques and Schoen in \cite{khu-mar-sch}, where they showed that the set is compact if the dimension of the manifold satisfies $3\leqslant n\leqslant 24$. On the other hand, lack of compactness was proved by Brendle~\cite{bre} and Brendle and Marques~\cite{bre-mar} when $n\geqslant 25$. 

Less is known about the set of sign-changing solutions to equation~\eqref{eq:yp}. In a classical paper by Ding~\cite{din}, he established the existence of infinitely many sign-changing solutions to \eqref{eq:yp} in the Euclidean space, or equivalently, on the standard sphere, with unbounded energy. Using a variation of the method of moving planes, Weth~\cite{wet} obtained a sharper energy lower bound of the set of sign-changing solutions. For further references on the existence and multiplicity of sign-changing solutions to the Yamabe equation on general Riemannian manifolds, see for example Ammann and Humbert~\cite{amm-hum}, Clapp and Fern\'{a}ndez~\cite{cla-fer}, Clapp, Salda\~na and Szulkin~\cite{cla-sal-szu},  del Pino, Musso, Pacard and Pistoia~\cite{del-mus-pac-pis1, del-mus-pac-pis2}, Fern\'{a}ndez and Petean~\cite{fer-pet}, Henry~\cite{hen}, Musso and Wei~\cite{mus-wei}, Petean~\cite{pet} and the references within.

In the case where $(M,g)$ has a non-empty boundary, Escobar~\cite{esc} proposed and studied the following boundary version of the Yamabe problem in the scalar-flat case. Given a smooth compact Riemannian manifold with boundary $\partial M$, is there a conformal metric with zero scalar curvature and constant boundary mean curvature? If we write similarly the conformal change as $\tilde{g}=u^{\frac{4}{n-2}}g$, then the problem is equivalent to proving the existence of positive solutions to the following equation:
\begin{align}\label{eq:ypb}
\begin{cases}
\Delta_g u-\frac{n-2}{4(n-1)}R_g u=0 \ & \text{in} \ M\\
\frac{\partial u}{\partial \nu_g}-\frac{n-2}{2}h_g u+K|u|^{\frac{2}{n-2}}u=0\ & \text{on} \ \partial M,
\end{cases}
\end{align}
where $\nu_g$ is the inward-pointing normal vector, $h_g$ denotes the boundary mean curvature, and $K$ now corresponds to the prescribed constant mean curvature with respect to $\tilde{g}$. Regularity of solutions to \eqref{eq:ypb} was obtained by Cherrier~\cite{che}. Existence results were established by Almaraz~\cite{alm}, Brendle and Chen~\cite{bre-che}, Escobar~\cite{esc}, Marques~\cite{mar1, mar2} and Mayer and Ndiaye~\cite{may-ndi}. For compactness and non-compactness results, see for example Almaraz~\cite{alm3, alm2}, Almaraz, Queiroz and Wang~\cite{alm-que-wan}, Felli and Ahmedou~\cite{fel-ahm1, fel-ahm2}, Ghimenti and Micheletti~\cite{ghi-mic} and Kim, Musso and Wei~\cite{kim-mus-wei}.

Equation \eqref{eq:r+n} is called critical due to the lack of compactness of the corresponding Sobolev trace embedding. As a result, the traditional variational method cannot be applied directly here to prove the existence of solutions. Solutions to \eqref{eq:r+n} are critical points of the functional
\begin{equation}\label{energy:r+n}
I(u)=\frac{1}{2}\int_{\R^n_+}|\nabla u|^2-\frac{1}{2^*}\int_{\partial\R^n_+}|u|^{2^*},
\end{equation}
where $2^*:=\frac{2(n-1)}{n-2}$ denotes the critical power of the following Sobolev trace inequality:
\begin{equation}\label{ineq:sob}
\int_{\R^n_+}|\nabla u|^2\geqslant S_n\(\int_{\partial\R^n_+}|u|^{2^*}\)^{\frac{2}{2^*}}.
\end{equation}
Here $S_n$ is the Sobolev best constant. 

Define $D^{1,2}(\R_+^n)$ to be the completion of smooth functions with compact support with respect to the norm $$\|u\|_D=\(\int_{\R_+^n}|\nabla u|^2\)^\frac{1}{2}.$$ Throughout the paper, we use the term  standard bubbles to denote the following functions:
\begin{equation}\label{eq:bubble}
U(\varepsilon,y;x,t):=\(\frac{(n-2)\varepsilon}{(\varepsilon+t)^2+|x-y|^2}\)^{\frac{n-2}{2}} \ \text{for} \ x\in  \R^{n-1}, \ t\geqslant  0,
\end{equation}
where $\varepsilon>0$, $y\in \R^{n-1}$. We also write 
$$B:=\{U(\varepsilon,y;x,t):\varepsilon>0, y\in \R^{n-1}\}$$ as the set of all standard bubbles. We sometimes use $U(\varepsilon,y)$ to denote $U(\varepsilon,y;x,t)$ when there is no possible confusion. It is well known that $B$ is the set of all the positive solutions to \eqref{eq:r+n}.  Moreover, $B$ is also the set of least energy critical points of $I$ with $I(U(\varepsilon,y))=\frac{1}{2(n-1)}S_n^{n-1}$ for any $\varepsilon>0$, $y\in \R^{n-1}$. Equality holds in \eqref{ineq:sob} if and only if $u$ takes the form of standard bubbles \eqref{eq:bubble} up to a constant multiple. See for example Escobar~\cite{esc1}.

Via the extension method developed by Caffarelli and Silvestre~\cite{caf-sil}, the existence of a solution $u$ to \eqref{eq:r+n} is equivalent to the existence of a solution $\bar{u}$ to the following fractional equation:
\begin{equation}\label{eq:frac}
(-\Delta)^{\frac{1}{2}}\bar{u}=|\bar{u}|^\frac{2}{n-2}\bar{u} \ \text{in} \ \R^{n-1},
\end{equation}
where $\bar{u}=u|_{\partial \R_+^n}$. Just recently, using a similar method as in Ding~\cite{din}, Abreu, Barbosa and Ramirez~\cite{abr-bar-ram} established the existence of sign-changing solutions to \eqref{eq:frac} with unbounded energy. In particular, they proved the following:

\begin{theorem}[\cite{abr-bar-ram}]\label{thm:existence frac}
Equation~\eqref{eq:frac} has an unbounded sequence of sign-changing solutions $\{\bar{u}_k\}_{k\in \N}\subset D^{\frac{1}{2},2}(\R^{n-1})$ when $n\geqslant 4$.
\end{theorem}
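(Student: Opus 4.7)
The plan is to adapt Ding's classical variational argument to the fractional setting. First, pass via stereographic projection from $\R^{n-1}$ to the sphere $S^{n-1}$: since $(-\Delta)^{1/2}$ is conformally covariant, equation~\eqref{eq:frac} becomes an equivalent critical equation on $S^{n-1}$ involving the conformal fractional Laplacian. Equivalently, via the Caffarelli--Silvestre extension, the problem is conformally isomorphic to the boundary Neumann problem on the unit ball $B^n$. This replaces the non-compact group of translations and dilations of $\R^{n-1}$ with the compact orthogonal group $\OO(n)$ acting on $S^{n-1}$.

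Next, for $n \geq 4$ (so $n-1 \geq 3$), fix an integer $1 \leq k \leq n-2$ and consider the closed subgroup $G := \OO(k) \times \OO(n-k)$ of $\OO(n)$. Let $E_G$ denote the closed subspace of $G$-invariant functions in the corresponding Sobolev space. By the principle of symmetric criticality (Palais), critical points of the restriction $I|_{E_G}$ correspond to solutions of~\eqref{eq:frac}. The crucial gain is compactness: standard bubbles on $S^{n-1}$ concentrate at a single point, yet any generic $G$-orbit has positive dimension, so a concentrating Palais--Smale sequence in $E_G$ must spread mass along an orbit, losing a definite multiple of the single-bubble energy. This yields a local Palais--Smale condition for $I|_{E_G}$ between consecutive such energy multiples.

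Now apply a symmetric minimax scheme. Since the nonlinearity $|\bar u|^{2/(n-2)} \bar u$ is odd, $I|_{E_G}$ is even; a fountain theorem or $\mathbb{Z}_2$-equivariant Ljusternik--Schnirelmann argument on the associated Nehari-type manifold then produces an unbounded sequence of critical values $c_j \to \infty$. The associated solutions $\bar u_j$ are automatically sign-changing: any positive solution of~\eqref{eq:frac} is a standard bubble of energy $\frac{1}{2(n-1)} S_n^{n-1}$, so any solution with strictly larger energy cannot have constant sign, and by passing to a subsequence we may discard the finitely many whose energy coincides with a bubble's.

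The main obstacle is the Palais--Smale analysis on the $G$-invariant subspace at arbitrarily high levels. One must run a concentration-compactness argument for the non-local fractional equation (or equivalently for the Neumann extension problem on $\R^n_+$), classify all bubble profiles, and show that $G$-symmetry forces bubbles to appear in orbits of cardinality at least two, so that the energy lost in any concentration is an integer multiple of the ground-state energy strictly greater than a single bubble's. This is precisely where $n \geq 4$ enters: it allows one to choose $G$ so that $S^{n-1}$ contains no $G$-fixed points at which a lone bubble could form unimpeded, thereby closing the compactness deficit at every prescribed energy window.
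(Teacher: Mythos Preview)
The paper does not prove this theorem at all: Theorem~\ref{thm:existence frac} is simply quoted from \cite{abr-bar-ram}, and the surrounding text even says that those authors proceed ``using a similar method as in Ding~\cite{din}''. So there is no proof in the paper to compare against; your proposal is in fact a sketch of the argument of the cited reference rather than of anything the present authors do.

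That said, your outline is the right one and matches what \cite{abr-bar-ram} carries out: transfer to the sphere via conformal covariance (equivalently, to the ball via the extension), restrict to $G$-invariant functions with $G=\OO(k)\times\OO(n-k)$, use symmetric criticality, and recover Palais--Smale compactness on the invariant subspace to run an equivariant minimax. One small correction: to guarantee that \emph{every} $G$-orbit on $S^{n-1}$ is positive-dimensional (so that no $G$-invariant Palais--Smale sequence can concentrate at a single bubble), you need both factors nontrivial in the sense $2\leqslant k\leqslant n-2$, not $1\leqslant k\leqslant n-2$; with $k=1$ the two poles $(\pm1,0,\dots,0)$ have finite orbit. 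The requirement $2\leqslant k\leqslant n-2$ is exactly what forces $n\geqslant 4$, as you note at the end. Apart from this, your identification of the main technical burden --- the fractional profile decomposition and the verification that symmetry upgrades each concentration loss to at least an orbit's worth of bubble energy --- is accurate, and is indeed where the work in \cite{abr-bar-ram} lies.
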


Here, the space $D^{s,2}(\R^n), \ 0<s<1$, is defined to be the completion of smooth functions with compact support with respect to the norm $$\|u\|_{s}:=\(\int_{\R^n}u(-\Delta)^su\)^\frac{1}{2}.$$ For more details on the fractional Laplacian operator and related function spaces, see Section 2. Using the extension method, it is easy to see that an unbounded sequence of solutions to \eqref{eq:frac} in $D^{\frac{1}{2},2}(\R^{n-1})$ corresponds to an unbounded sequence of solutions to \eqref{eq:r+n} in $D^{1,2}(\R_+^n)$.

As a corollary of Theorem~\ref{thm:existence frac}, we can obtain the following existence result of sign-changing solutions to equation~\eqref{eq:r+n}:

\begin{corollary}\label{thm:existence}
	There exists a sequence of sign-changing solutions $\{u_k\}_{{k\in \N}} \subset D^{1,2}(\R_+^n)$ of \eqref{eq:r+n} such that $\lim\limits_{k\to +\infty}I(u_k)\rightarrow +\infty$ when $n\geqslant 4$.
\end{corollary}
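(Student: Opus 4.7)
The plan is to obtain the corollary by a direct application of the Caffarelli–Silvestre extension method described in the introduction, transferring the sequence provided by Theorem~\ref{thm:existence frac} from the fractional setting on $\R^{n-1}$ to the half-space setting on $\R_+^n$. Given a sign-changing solution $\bar{u}_k\in D^{\frac{1}{2},2}(\R^{n-1})$ of \eqref{eq:frac}, I would define $u_k$ on $\R_+^n$ as its harmonic extension, i.e.\ the unique solution of $\Delta u_k=0$ in $\R_+^n$ with trace $u_k\big|_{\partial\R_+^n}=\bar{u}_k$. By the characterization of the $\tfrac{1}{2}$-Laplacian via the Dirichlet-to-Neumann map, one has $\partial u_k/\partial\nu=-(-\Delta)^{1/2}\bar{u}_k$ on $\partial\R_+^n$ (up to the usual normalization constant, which can be absorbed into the scaling $u_k\mapsto\lambda u_k$ by using the homogeneity of \eqref{eq:r+n} under $u\mapsto c\,u(cx)$). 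Hence $u_k$ solves \eqref{eq:r+n}.

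Next I would verify that $u_k\in D^{1,2}(\R_+^n)$ and that $u_k$ changes sign. The first is standard: the trace theory for harmonic extensions gives
\[
\int_{\R_+^n}|\nabla u_k|^2=c_n\int_{\R^{n-1}}\bar{u}_k(-\Delta)^{1/2}\bar{u}_k=c_n\|\bar{u}_k\|_{1/2}^2<\infty,
\]
so $u_k\in D^{1,2}(\R_+^n)$. Since $\bar{u}_k$ is sign-changing and $\bar{u}_k$ is the trace of $u_k$, the function $u_k$ itself must take both positive and negative values, so $u_k$ is sign-changing.

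For the unboundedness of $I(u_k)$, I would use the equation to eliminate one of the two terms in \eqref{energy:r+n}. Multiplying \eqref{eq:r+n} by $u_k$ and integrating by parts on $\R_+^n$ gives
\[
\int_{\R_+^n}|\nabla u_k|^2=\int_{\partial\R_+^n}|\bar{u}_k|^{2^*},
\]
so that
\[
I(u_k)=\Bigl(\tfrac{1}{2}-\tfrac{1}{2^*}\Bigr)\int_{\partial\R_+^n}|\bar{u}_k|^{2^*}=\frac{1}{2(n-1)}\int_{\partial\R_+^n}|\bar{u}_k|^{2^*}.
\]
A parallel integration-by-parts for the fractional equation yields $\int_{\R^{n-1}}\bar{u}_k(-\Delta)^{1/2}\bar{u}_k=\int_{\R^{n-1}}|\bar{u}_k|^{2^*}$, so the energy of $\bar{u}_k$ as a critical point of the fractional functional is also a fixed multiple of $\int|\bar{u}_k|^{2^*}$. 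The unboundedness statement in Theorem~\ref{thm:existence frac} is precisely that these fractional energies diverge, hence $I(u_k)\to+\infty$.

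The proof is essentially mechanical once the extension correspondence is invoked; the only point requiring a little care is matching the normalization constants in the Dirichlet-to-Neumann identification so that the extended $u_k$ actually satisfies \eqref{eq:r+n} (rather than a constant multiple of it), and confirming that ``unbounded sequence'' in Theorem~\ref{thm:existence frac} means unbounded in the natural energy, so that the computation above produces $I(u_k)\to+\infty$ rather than merely $\|u_k\|_D\to\infty$. These are routine scaling issues and not a substantive obstacle.
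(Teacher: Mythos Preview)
Your proposal is correct and follows exactly the approach the paper has in mind: the paper does not write out an explicit proof of the corollary but simply remarks, just before its statement, that the Caffarelli--Silvestre extension sends an unbounded sequence of solutions of \eqref{eq:frac} in $D^{\frac12,2}(\R^{n-1})$ to an unbounded sequence of solutions of \eqref{eq:r+n} in $D^{1,2}(\R_+^n)$. Your computation expressing $I(u_k)$ as a constant multiple of $\int_{\partial\R_+^n}|\bar u_k|^{2^*}$ makes this transfer explicit, and the scaling/normalization caveats you flag are indeed routine.
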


Corollary~\ref{thm:existence} establishes the existence of sign-changing solutions of different energy levels. As we have already mentioned, all positive solutions of \eqref{eq:r+n}, i.e., all standard bubbles, have the same level of energy $\frac{1}{2(n-1)}S_n^{n-1}$ and they are the set of least energy solutions. The following proposition states that any sign-changing solution to \eqref{eq:r+n} has at least twice the energy of the standard bubbles: 

\begin{proposition}\label{prop:lb}
	Every sign-changing solution $u \in D^{1,2}(\R_+^n)$ of \eqref{eq:r+n} satisfies $I(u)>\frac{1}{n-1}S_n^{n-1}$ where $S_n$ is the Sobolev best constant defined in \eqref{ineq:sob}.
\end{proposition}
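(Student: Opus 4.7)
The overall plan is to split $u$ into its positive and negative parts, apply the standard Pohozaev-type identity to each separately, and then invoke the Sobolev trace inequality \eqref{ineq:sob}. The strict inequality will come from the rigidity of the equality case.

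First I would simplify the expression for $I(u)$ on any solution. Testing the equation against $u$ itself (integrating by parts and using the Neumann-type boundary condition with inward normal) yields
\[
\int_{\R^n_+}|\nabla u|^2=\int_{\partial\R^n_+}|u|^{2^*}.
\]
Plugging this into \eqref{energy:r+n} and using $\frac{1}{2}-\frac{1}{2^*}=\frac{1}{2(n-1)}$ gives the identity $I(u)=\frac{1}{2(n-1)}\int_{\partial\R^n_+}|u|^{2^*}$, so it suffices to show $\int_{\partial\R^n_+}|u|^{2^*}>2S_n^{n-1}$.

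Next I would run the same testing procedure, but now against $u^+:=\max(u,0)$ and $u^-:=\max(-u,0)$, both of which lie in $D^{1,2}(\R^n_+)$ whenever $u$ does. Since $\nabla u\cdot\nabla u^+=|\nabla u^+|^2$ a.e.\ and $|u|^{\frac{2}{n-2}}u\cdot u^+=(u^+)^{2^*}$ pointwise on $\partial\R^n_+$, the weak formulation gives
\[
\int_{\R^n_+}|\nabla u^+|^2=\int_{\partial\R^n_+}(u^+)^{2^*},
\]
and the analogous identity for $u^-$. Applying the Sobolev trace inequality \eqref{ineq:sob} to $u^\pm$ and using that $\frac{2^*-2}{2^*}=\frac{1}{n-1}$ yields
\[
\int_{\partial\R^n_+}(u^\pm)^{2^*}\geqslant S_n^{n-1},
\]
provided $u^\pm\not\equiv 0$. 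Since $u$ is sign-changing, both $u^+$ and $u^-$ are nontrivial, and summing the two estimates produces $\int_{\partial\R^n_+}|u|^{2^*}\geqslant 2S_n^{n-1}$, i.e.\ $I(u)\geqslant\frac{1}{n-1}S_n^{n-1}$.

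The step that still requires care is the strict inequality. I would argue it from the rigidity statement quoted after \eqref{eq:bubble}: equality in \eqref{ineq:sob} for a nontrivial function forces that function to be a nonzero constant multiple of a standard bubble $U(\varepsilon,y)$. If equality held for $u^+$, then $u^+$ would agree with a strictly positive standard bubble on all of $\R^n_+$; but $u^+\equiv 0$ on the set $\{u<0\}$, which has positive measure since $u^-\not\equiv 0$. This contradiction forces strict inequality in \eqref{ineq:sob} for $u^+$, hence the strict bound $I(u)>\frac{1}{n-1}S_n^{n-1}$. Here the only non-routine point is justifying that the test-function computations carry through at the level of $D^{1,2}(\R^n_+)$ weak solutions (which is standard truncation and Cherrier's regularity for \eqref{eq:r+n}); otherwise the proof is entirely elementary.
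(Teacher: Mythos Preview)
Your proof is correct and follows essentially the same route as the paper: test $u^{\pm}$ against the equation to obtain $\|u^{\pm}\|_D^2=\|u^{\pm}\|_{L^{2^*}(\partial\R^n_+)}^{2^*}$, apply the Sobolev trace inequality to each part, and then rule out equality. The only cosmetic difference is in the strictness step---the paper notes that equality would force $u^+$ to be a solution of \eqref{eq:r+n} and invokes the maximum principle, whereas you invoke the classification of extremizers in \eqref{ineq:sob} directly; both routes give $u^+>0$ everywhere and the same contradiction with $u^-\not\equiv 0$.
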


A natural question is whether the energy lower bound in Proposition~\ref{prop:lb} is sharp. Our next result is inspired by \cite{wet} and gives a negative answer to that.

\begin{theorem}\label{thm:lb}
	There exists $\gamma>0$ such that $I(u)\geqslant \frac{1}{n-1}S_n^{n-1}+\gamma$ for any sign-changing solution $u \in D^{1,2}(\R_+^n)$ of \eqref{eq:r+n}.
\end{theorem}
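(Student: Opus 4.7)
We argue by contradiction: suppose there exists a sequence $\{u_k\} \subset D^{1,2}(\R_+^n)$ of sign-changing solutions of \eqref{eq:r+n} with $I(u_k) \to \frac{1}{n-1} S_n^{n-1}$. Testing \eqref{eq:r+n} against $u_k$ and integrating by parts yields $\|u_k\|_D^2 = \int_{\partial \R_+^n} |u_k|^{2^*}$, so $I(u_k) = \frac{1}{2(n-1)} \|u_k\|_D^2$ and hence $\|u_k\|_D^2 \to 2 S_n^{n-1}$.

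First I would apply a Struwe-type profile decomposition adapted to the critical trace problem \eqref{eq:r+n}. Since each $u_k$ is a bounded-energy critical point of $I$, up to a subsequence one can write
\[
u_k = \sum_{i=1}^{m} \sigma_k^{(i)}\, U(\varepsilon_k^{(i)}, y_k^{(i)}) + v_k, \qquad \sigma_k^{(i)} \in \{\pm 1\}, \quad \|v_k\|_D \to 0,
\]
with asymptotically orthogonal bubbles and $\|u_k\|_D^2 = m\, S_n^{n-1} + o(1)$. Proposition~\ref{prop:lb} rules out a sign-changing weak limit, so the energy constraint forces $m=2$, and the sign-changing property of $u_k$ forces $\sigma_k^{(1)}\sigma_k^{(2)} = -1$. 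Using translation and dilation invariance I normalize the positive bubble to $U(1, 0)$ and write the negative one as $-U(\varepsilon_k, y_k)$; asymptotic orthogonality then means either $|y_k| \to \infty$ or $\varepsilon_k \to 0, \infty$.

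Next, following Weth's strategy in \cite{wet}, I would apply the method of moving planes in a direction $e \in \R^{n-1}$ parallel to $\partial \R_+^n$. For each $\lambda$ the reflected function $u_{k,\lambda} := u_k \circ \sigma_\lambda$ also solves \eqref{eq:r+n}, and $w_{k,\lambda} := u_k - u_{k,\lambda}$ satisfies, on the half $\{x\cdot e > \lambda\} \cap \R_+^n$, the linear system $\Delta w_{k,\lambda} = 0$ with Robin-type boundary condition $\partial_\nu w_{k,\lambda} + c_{k,\lambda}(x) w_{k,\lambda} = 0$ on the boundary piece, where $c_{k,\lambda} \geq 0$ by convexity of $s \mapsto |s|^{2/(n-2)} s$. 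Starting with $\lambda \gg 0$ (where bubble decay gives $w_{k,\lambda} > 0$) and decreasing $\lambda$ to a critical value, the Hopf lemma at the stopping hyperplane forces an asymptotic antisymmetry of $u_k$ across a hyperplane that approximately equidistantly separates the two bubble centers. Combining this antisymmetry with a careful expansion of the bubble--bubble interaction energy should produce a uniform strict lower bound $\|u_k\|_D^2 \geq 2 S_n^{n-1} + \gamma$, contradicting the hypothesis.

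The main obstacle, I expect, lies in turning the qualitative ``asymptotic antisymmetry'' produced by moving planes into the quantitative energy gap $\gamma > 0$. This requires uniform Hopf-type boundary estimates for the nonlinear Robin problem, a sharp analysis of the bubble--bubble interaction that does not degenerate as $\varepsilon_k \to 0, \infty$ or $|y_k| \to \infty$, and control of the remainder $v_k$ near the critical hyperplane. A secondary difficulty is reducing the scale-degenerate cases $\varepsilon_k \to 0, \infty$ to the translation-separated case $|y_k| \to \infty$ via conformal inversions centered on $\partial \R_+^n$, which is a delicate transformation in this mixed boundary setting and has no analogue in the interior Yamabe problem treated in \cite{wet}.
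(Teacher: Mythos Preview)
Your contradiction set-up and the identification of the limiting profile as two bubbles of opposite sign are the same as in the paper. The divergence is in the last two steps, and there the paper's route is both simpler and avoids the obstacles you flag.

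First, the paper does not leave the second bubble as $-U(\varepsilon_k,y_k)$ with possibly degenerate scale. It proves an elementary conformal lemma (Lemma~\ref{lemma:trans}): for any two bubbles there is a $T\in\Gamma$ (translation, dilation, rotation, Kelvin) with $TU(\varepsilon,y)=U(1,z)$ and $TU(\varepsilon',y')=U(1,-z)$, $z=(z_1,0,\dots,0)$. Applying $T_k$ to $u_k$ forces both bubbles to scale $1$ and symmetric centers $\pm z_k$, and one shows $z_{k,1}\to+\infty$. This single step eliminates your ``secondary difficulty'' about $\varepsilon_k\to 0,\infty$ entirely; there is no case analysis.

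Second, and more importantly, the paper does not try to extract a quantitative energy gap from moving planes plus an interaction expansion. That endgame will not work as you describe it: once the bubbles are asymptotically orthogonal the interaction $\langle U(1,z_k),U(1,-z_k)\rangle_D$ tends to $0$, so an expansion of $\|u_k\|_D^2$ gives $2S_n^{n-1}+o(1)$, not $2S_n^{n-1}+\gamma$. The paper instead proves a clean non-existence statement (Proposition~\ref{prop:nonexistence}): no sign-changing solution can be strictly larger on a compact $\Omega\subset\partial'H$ than on its reflection, have small $L^{2^*}$-mass off $\Omega\cup r_0(\Omega)$, and satisfy $\inf_{\partial'H}u>\inf_{\partial\R^n_+\setminus\partial'H}u$. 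The moving-plane argument there shows $w_s:=u-u\circ r_s\ge 0$ on $\partial'H_s$ can be started at $s=0$ (using the small-mass hypothesis and the trace Sobolev inequality to kill $w_0^-$) and then slid to some $s_1>\bar s$, contradicting the definition of the stopping value $\bar s$. After the symmetrization and $L^\infty$-closeness (Lemma~\ref{lemma:inftybound}), $v_k=T_ku_k$ satisfies all three hypotheses for $k$ large, and that is the contradiction. No Hopf-type uniform estimates, no interaction expansion, and no control of the remainder near the critical hyperplane are needed.
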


As an application of Theorem~\ref{thm:lb}, let us consider the following equation in a bounded domain $D\subset \R^n$ with smooth boundary $\partial D$:
\begin{align}\label{eq:D}
\begin{cases}
\Delta u=0 \ & \text{in} \ D\\
\frac{\partial u}{\partial \nu}+\lambda u+|u|^{\frac{2}{n-2}}u=0\ & \text{on} \ \partial D,
\end{cases}
\end{align}
where $\lambda\in \R$ and $\nu$ points inwards. The corresponding functional is 
\begin{equation}\label{energy:D}
I_{\lambda,D}(u)=\frac{1}{2}\int_{D}|\nabla u|^2-\frac{\lambda }{2}\int_{\partial D}u^2-\frac{1}{2^*}\int_{\partial D}|u|^{2^*}.
\end{equation}
A slight modification of \cite[Theorem 1.3]{alm4} to include sign-changing functions produces the following Struwe-type compactness result:

\begin{theorem}[\cite{alm4}]\label{struwe_thm}
Suppose a sequence $\{u_k\}_{k\in\mathbb{N}}\subset H^1(D)$ is such that $\{I_{\lambda,D}(u_{k})\}$ is bounded and 
$$\nabla I_{\lambda,D}(u_k)\rightarrow 0\,\:\:\:\text{as}\:k\to+\infty.$$
Then there exist $m\in\{0,1,2,...\}$, a solution  $u^0\in H^1(D)$ of \eqref{eq:D}, $m$ non-trivial solutions $u^{(j)}\in D^{1,2}(\Rn)$ of \eqref{eq:r+n}, sequences  $\{R_{k}^{(j)}>0\}_{k\in\mathbb{N}}$ and sequences $\{x_{k}^{(j)}\}_{k\in\mathbb{N}}\subset \d D$, $1\leqslant j\leqslant m$ such that the whole satisfies the following conditions for $1\leqslant j\leqslant m$, possibly after taking a subsequence:
\\
(i) $R_{k}^{(j)}\to+\infty$ as $k\to+\infty$;
\\
(ii) $x_{k}^{(j)}$ converges as $k\to+\infty$;
\\
(iii) $\left\| u_{k}-u^0-\sum_{j=1}^{m}u_{k}^{(j)}\right\|_{H^1(D)}\to 0$ as $k\to+\infty$, where 
$$
u_{k}^{(j)}(x)=(R_{k}^{(j)})^{\frac{n-2}{2}}u^{(j)}(R_{k}^{(j)}(x-x_k^{(j)}))\,.
$$
Moreover,
$$
I_{\lambda,D}(u_{k})-I_{\lambda,D}(u^0)-\sum_{j=1}^{m}I(u_{k}^{(j)})\to 0
\:\:\:\:\text{as}\:k\to+\infty\,. $$
\end{theorem}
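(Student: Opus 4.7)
The plan is to follow Struwe's classical profile--decomposition argument, adapted to the boundary functional $I_{\lambda,D}$ and allowing the extracted profiles $u^{(j)}$ to be any (possibly sign-changing) solutions of \eqref{eq:r+n}, not just standard bubbles. First I would deduce an $H^1(D)$ bound for $\{u_k\}$ from
\begin{equation*}
I_{\lambda,D}(u_k)-\tfrac12\langle\nabla I_{\lambda,D}(u_k),u_k\rangle=\Bigl(\tfrac12-\tfrac{1}{2^*}\Bigr)\int_{\partial D}|u_k|^{2^*},
\end{equation*}
which controls the boundary $L^{2^*}$-norm; inserting this back into $\langle\nabla I_{\lambda,D}(u_k),u_k\rangle=o(\|u_k\|_{H^1})$ and using the Sobolev trace inequality bounds the full norm. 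Passing to a weakly convergent subsequence $u_k\rightharpoonup u^0$ in $H^1(D)$, the compact trace embedding $H^1(D)\hookrightarrow L^p(\partial D)$ for $p<2^*$, pointwise a.e.\ convergence, and a Vitali-type argument let one pass to the limit in the Euler--Lagrange equation and show that $u^0$ solves \eqref{eq:D}.

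Next I would carry out the bubble extraction. Setting $v_k^{(1)}:=u_k-u^0$, Brezis--Lieb gives
\begin{equation*}
\|u_k\|_{H^1}^2=\|u^0\|_{H^1}^2+\|v_k^{(1)}\|_{H^1}^2+o(1),\qquad \int_{\partial D}|u_k|^{2^*}=\int_{\partial D}|u^0|^{2^*}+\int_{\partial D}|v_k^{(1)}|^{2^*}+o(1),
\end{equation*}
from which $v_k^{(1)}$ is a Palais--Smale sequence for the limit functional (the $\lambda$-term is negligible since $v_k^{(1)}\to 0$ strongly in $L^2(\partial D)$). If $v_k^{(1)}\to 0$ in $H^1$ we stop with $m=0$; otherwise, a Lions-type concentration-compactness analysis of the measures $|v_k^{(1)}|^{2^*}d\sigma$ on the compact boundary $\partial D$ yields a concentration point $x_\infty^{(1)}\in\partial D$, a sequence $x_k^{(1)}\in\partial D$ converging to it, and scales $R_k^{(1)}\to+\infty$ such that, after flattening $\partial D$ by a boundary chart near $x_\infty^{(1)}$, the rescaled functions
\begin{equation*}
\tilde v_k^{(1)}(y):=(R_k^{(1)})^{-(n-2)/2}\,v_k^{(1)}\!\bigl(x_k^{(1)}+y/R_k^{(1)}\bigr)
\end{equation*}
converge weakly in $D^{1,2}(\Rn)$ to a non-trivial $u^{(1)}$ solving \eqref{eq:r+n}.

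The final step is iteration. Defining $u_k^{(1)}(x)=(R_k^{(1)})^{(n-2)/2}u^{(1)}(R_k^{(1)}(x-x_k^{(1)}))$ and $v_k^{(2)}:=v_k^{(1)}-u_k^{(1)}$, one checks that $v_k^{(2)}$ is again a Palais--Smale sequence at strictly smaller energy and repeats the extraction. The splittings
\begin{equation*}
I_{\lambda,D}(u_k)=I_{\lambda,D}(u^0)+\sum_{j=1}^{\ell}I(u_k^{(j)})+o(1),\qquad\|v_k^{(\ell+1)}\|_{H^1}^2=\|v_k^{(\ell)}\|_{H^1}^2-\|u_k^{(\ell)}\|_{H^1}^2+o(1),
\end{equation*}
combined with the universal lower bound $I(u^{(j)})\geqslant \tfrac{1}{2(n-1)}S_n^{n-1}$ for every non-trivial solution of \eqref{eq:r+n} (achieved on the family $B$ and strictly improved for sign-changing profiles by Proposition~\ref{prop:lb}), force termination after finitely many steps.

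The hard part will be the geometric bookkeeping of the concentration data $(R_k^{(j)},x_k^{(j)})$: one has to prove an \emph{orthogonality} statement ensuring that distinct profiles have asymptotically non-interacting supports, i.e.\ for $j\neq j'$,
\begin{equation*}
\frac{R_k^{(j)}}{R_k^{(j')}}+\frac{R_k^{(j')}}{R_k^{(j)}}+R_k^{(j)}\,|x_k^{(j)}-x_k^{(j')}|\longrightarrow+\infty,
\end{equation*}
so that the sum $u^0+\sum_{j=1}^{m}u_k^{(j)}$ admits a clean asymptotic energy decomposition and the strong remainder convergence (iii) holds. Allowing sign-changing profiles does not affect this geometric step, but throughout the argument any appeal to positivity or to the explicit form \eqref{eq:bubble} must be replaced by one valid for an abstract solution $u^{(j)}\in D^{1,2}(\Rn)$ of \eqref{eq:r+n}; Corollary~\ref{thm:existence} guarantees that such sign-changing profiles can indeed occur at arbitrarily large energy.
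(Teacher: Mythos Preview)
The paper does not actually prove this theorem; it is quoted from \cite{alm4} with the single remark that the argument there requires only a slight modification to allow the extracted profiles $u^{(j)}$ to be sign-changing solutions of \eqref{eq:r+n} rather than standard bubbles. Your outline is precisely the classical Struwe profile-decomposition scheme that \cite{alm4} carries out in the boundary-trace setting, so there is nothing substantive to compare: your approach matches the cited source, and the one point the paper flags---replacing any use of positivity or of the explicit form \eqref{eq:bubble} by arguments valid for an arbitrary nontrivial solution of \eqref{eq:r+n}, together with the energy quantum $I(u^{(j)})\geqslant \tfrac{1}{2(n-1)}S_n^{n-1}$---is exactly what you already isolate.
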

Hence, Theorem \ref{struwe_thm}, together with Theorem~\ref{thm:lb}, gives the following:
\begin{corollary}\label{cor PS}
	Let $\Lambda:=min\{\gamma, \frac{1}{2(n-1)}S_n^{\frac{1}{n-1}}\}$, where $\gamma$ is obtained in Theorem~\ref{thm:lb} and $S_n$ is the Sobolev best constant defined as before. If $\{u_k\}_{k\in \N}\subset H^1(D)$ is a sequence with
	$$\nabla I_{\lambda,D}(u_k)\rightarrow 0,$$
	and $$I_{\lambda,D}(u_k)\rightarrow c\in \(0,\frac{1}{n-1}S_n^{n-1}+\Lambda\)\backslash\left\lbrace \frac{1}{2(n-1)}S_n^{n-1},\frac{1}{n-1}S_n^{n-1}\right\rbrace ,$$
as $k\to+\infty$, 
then  a subsequence of $\{u_k\}_{k\in \N}$ has a non-trivial weak limit.
\end{corollary}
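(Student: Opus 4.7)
The plan is to argue by contradiction using the Struwe-type decomposition of Theorem~\ref{struwe_thm}. Suppose, for contradiction, that every subsequence of $\{u_k\}$ converges weakly to $0$; then the background solution $u^0$ in the decomposition vanishes, and the asymptotic energy identity reduces to
$$c = \lim_{k\to\infty} I_{\lambda,D}(u_k) = \sum_{j=1}^m I(u^{(j)}),$$
where I have used the conformal invariance of $I$ under the critical rescaling $u_k^{(j)}(x)=(R_k^{(j)})^{(n-2)/2} u^{(j)}(R_k^{(j)}(x-x_k^{(j)}))$; the exponent $2^*=\frac{2(n-1)}{n-2}$ is chosen precisely so that both $\int_{\R^n_+}|\nabla\cdot|^2$ and $\int_{\partial\R^n_+}|\cdot|^{2^*}$ are scale-invariant under this action, giving $I(u_k^{(j)})=I(u^{(j)})$.

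Each $u^{(j)}\in D^{1,2}(\R^n_+)$ is a non-trivial solution of \eqref{eq:r+n}, so it falls into one of two energy classes. If $u^{(j)}$ does not change sign, then up to sign it is a standard bubble, giving $I(u^{(j)})=\frac{1}{2(n-1)}S_n^{n-1}$; if it does change sign, Theorem~\ref{thm:lb} yields $I(u^{(j)})\geqslant \frac{1}{n-1}S_n^{n-1}+\gamma$. In either case $I(u^{(j)})>0$. Now I would split into cases. The case $m=0$ forces $c=0$, contradicting $c>0$. If some $u^{(j)}$ is sign-changing, then
$$c\geqslant \frac{1}{n-1}S_n^{n-1}+\gamma\geqslant \frac{1}{n-1}S_n^{n-1}+\Lambda,$$
violating the upper bound on $c$. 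If every $u^{(j)}$ is a signed bubble, then $c=\frac{m}{2(n-1)}S_n^{n-1}$; the values $m=1,2$ coincide exactly with the two excluded levels $\frac{1}{2(n-1)}S_n^{n-1}$ and $\frac{1}{n-1}S_n^{n-1}$, while for $m\geqslant 3$
$$c\geqslant \frac{3}{2(n-1)}S_n^{n-1}=\frac{1}{n-1}S_n^{n-1}+\frac{1}{2(n-1)}S_n^{n-1}\geqslant \frac{1}{n-1}S_n^{n-1}+\Lambda,$$
again impossible. Every case contradicts the assumptions on $c$, so along some subsequence the weak limit of $\{u_k\}$ must be non-trivial.

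There is no substantial obstacle here: the corollary is essentially a bookkeeping consequence of the Struwe decomposition in Theorem~\ref{struwe_thm} combined with the improved energy gap of Theorem~\ref{thm:lb}. The definition of $\Lambda$ is calibrated precisely so that both the \emph{adding one extra sign-changing solution} scenario (controlled by $\gamma$) and the \emph{adding a third signed bubble} scenario (controlled by $\tfrac{1}{2(n-1)}S_n^{n-1}$) are ruled out by the upper bound on $c$, while the two explicitly removed levels of $c$ eliminate the only remaining pure-bubble configurations with one or two bubbles.
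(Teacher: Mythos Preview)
Your argument is correct and is precisely the intended one: the paper does not spell out a proof of this corollary but simply states that it follows from Theorem~\ref{struwe_thm} together with Theorem~\ref{thm:lb}, and your contradiction argument via the energy decomposition is exactly how one makes that implication explicit. (Note that the exponent in the paper's definition of $\Lambda$ is an evident misprint for $S_n^{n-1}$; your computation uses the correct value.)
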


\begin{remark}
We would like to add that Corollary~\ref{cor PS} can be generalized to equations similar to \eqref{eq:ypb} on compact Riemannian manifolds with boundary. 
\end{remark}

For the proof of Theorem~\ref{thm:lb}, we use a certain variation of the moving planes method. This method was originated from classical papers of Serrin~\cite{ser} and Gidas, Ni and Nirenberg~\cite{gid-ni-nir}. It was usually applied to obtain certain symmetry properties of solutions. In our paper, we use this method to rule out the possibility of sign-changing solutions consisting of two bubbles of opposite signs and consequently establish a sharper lower bound of the energy. 

The paper is organized as follows. In Section 2, we present some basics about equation~\eqref{eq:r+n} and the related fractional operator and function spaces. Some elementary lemmas involving the convergence and transformation of certain functions will also be obtained in this section. In Section 3, we first prove Proposition~\ref{prop:lb} using a direct variational argument. The proof of Theorem~\ref{thm:lb} is given in the rest of that section via the method of moving planes.

\section{Preliminaries}
Throughout the paper, we denote $u^+:=max\{u,0\}$ and  $u^-:=max\{-u,0\}$ for any $u\in D^{1,2}(\R_+^n)$ so that $u=u^+-u^-$. Define $C_B^1(\R_+^n)$ to be the space of all bounded functions in $C^1(\R_+^n)$ with bounded gradient, endowed with the norm $\|u\|_{C_B^1(\R_+^n)}=\|u\|_{L^{\infty}(\R^n_+)}+\|\nabla u\|_{L^{\infty}(\R^n_+)}$.

We also define $\Gamma$ to be the set of all finite compositions of the following transformations on $u\in D^{1,2}(\R_+^n)$ : the translations, rotations, rescalings and Kelvin transformation, respectively
$$(y*u)(x,t):=u(x-y,t), \ y\in \R^{n-1},$$
$$(A*u)(x,t):=u(A^{-1}x,t), \ A\in O(n-1),$$
$$(\varepsilon*u)(x,t):=\varepsilon^{-\frac{n-2}{2}}u\(\frac{x}{\varepsilon},\frac{t}{\varepsilon}\), \ \varepsilon>0,$$
$$(K*u)(x,t):=|(x,t)|^{2-n}u\(\frac{x}{|(x,t)|^2},\frac{t}{|(x,t)|^2}\),$$
for any $x\in \R^{n-1}$ and $t\geqslant 0$. Here $O(n-1)$ is the orthogonal group.

It is easy to see that the functional \eqref{energy:r+n} is invariant under any transformation of $\Gamma$ and any $T\in \Gamma$ maps solutions of \eqref{eq:r+n} to solutions of \eqref{eq:r+n}. The following identities are easily verified:
$$(y'*U)(\varepsilon,y;x,t)=U(\varepsilon,y+y';x,t), \ y'\in \R^{n-1},$$
$$(A*U)(\varepsilon,y;x,t)=U(\varepsilon,Ay;x,t), \ A\in O(n-1),$$
$$(\varepsilon'*U)(\varepsilon,y;x,t)=U(\varepsilon'\varepsilon,\varepsilon'y;x,t), \ \varepsilon'>0,$$
$$(K*U)(\varepsilon,y;x,t)=U\(\frac{\varepsilon}{{\varepsilon}^2+|y|^2},\frac{y}{{\varepsilon}^2+|y|^2};x,t\),$$
where $U(\varepsilon,y;x,t)$ are the standard bubbles defined in \eqref{eq:bubble}.

The fractional Laplacian in $\mathbb{R}^n$ is a non-local pseudo-differential operator of the form:
\begin{equation}\label{eq:fl}
\begin{aligned}
(-\Delta)^s u(x)&=C_{n,s}P.V.\int_{\mathbb{R}^n}\frac{u(x)-u(y)}{|x-y|^{n+2s}}dy\\
&=C_{n,s}\lim_{\epsilon\rightarrow0^+}\int_{\mathbb{R}^n\backslash B_{\epsilon}(x)}\frac{u(x)-u(y)}{|x-y|^{n+2s}}dy,
\end{aligned}
\end{equation}
where $0<s<1$, $C_{n,s}$ is a positive constant depending only on $s$ and $n$ and $P.V.$ stands for the Cauchy principle value. In order for the integral to make sense, we require $u\in L^{2s}\cap C^{1,1}_{\rm loc}(\mathbb{R}^n)$ where
\begin{equation*}
L^{2s}(\mathbb{R}^n)=\{u\in L_{loc}^1(\mathbb{R}^n) \:| \int_{\mathbb{R}^n}\frac{|u(x)|}{1+|x|^{n+2s}}<+\infty\}.
\end{equation*}

Let's start by proving some elementary lemmas that we need in the next section. 

\begin{lemma}\label{lemma:regularity}
	Suppose $\{u_k\}_{k\in \N}\subset D^{1,2}(\R_+^n)$ is a sequence of solutions to \eqref{eq:r+n} with $sup_{k\in \N}\|u_k\|_D<+\infty$ and 
	\begin{equation}\label{eq:2^*bound}
	\lim_{k\rightarrow +\infty}\int_{B_{r_k}(x_k)\cap \partial \R_+^n}|u_k|^{2^*}=0,
	\end{equation}
	for any $x_k\in \partial \R_+^n$, and $r_k>0$ with $r_k\rightarrow 0$. Then we have $sup_{k\in \N}\|u_k\|_{C_B^1(\R_+^n)}<+\infty$.
\end{lemma}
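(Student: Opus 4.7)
The lemma is a uniform $C^1$ regularity estimate for a $D^{1,2}$-bounded family of critical solutions, and hypothesis \eqref{eq:2^*bound} is precisely the non-concentration condition that makes such a bound possible. The plan has four steps. First, I would upgrade \eqref{eq:2^*bound} to a uniform $\varepsilon$-smallness on boundary balls of a fixed scale: for every $\varepsilon>0$ there exists $r_0=r_0(\varepsilon)>0$ such that
\[
\sup_{k\in\N}\ \sup_{x_0\in\d\R_+^n}\int_{B_{r_0}(x_0)\cap\d\R_+^n}|u_k|^{2^*}<\varepsilon.
\]
If this failed, a subsequence $(k_m,x_m,r_m)$ with $r_m\to 0$ would realize mass $\geqslant\varepsilon_0$. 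A bounded $\{k_m\}$ contradicts absolute continuity of $|u_{k_0}|^{2^*}\in L^1(\d\R_+^n)$ (finite by the Sobolev trace inequality applied to $\|u_{k_0}\|_D<\infty$), while $k_m\to\infty$ contradicts \eqref{eq:2^*bound} directly.

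Next I would prove an $\varepsilon$-regularity statement at the boundary by Moser iteration: for $\varepsilon_0$ small enough, any solution $u$ of \eqref{eq:r+n} with $\int_{B_{r_0}(x_0)\cap\d\R_+^n}|u|^{2^*}<\varepsilon_0$ satisfies $\|u\|_{L^\infty(B_{r_0/2}^+(x_0))}\leqslant C\|u\|_D$. Testing \eqref{eq:r+n} against $\phi=\eta^2 u|u|^{2(\beta-1)}$ (truncated at height $M$, then passing to the limit) with $\eta$ a cutoff supported in $B_{r_0}^+(x_0)$, integration by parts and the boundary condition give
\[
(2\beta-1)\int_{\R_+^n}\eta^2|u|^{2(\beta-1)}|\nabla u|^2\ \leqslant\ \int_{\d\R_+^n}\eta^2|u|^{2\beta+\frac{2}{n-2}}\,d\sigma + C_\beta\int_{\R_+^n}|\nabla\eta|^2|u|^{2\beta}.
\]
H\"older on the boundary with conjugate exponents $\tfrac{n-1}{n-2}$ and $n-1$, together with the Sobolev trace inequality \eqref{ineq:sob} applied to $\eta|u|^\beta$, bounds the boundary term by
\[
S_n^{-1}\Big(\int_{\mathrm{supp}(\eta)\cap\d\R_+^n}|u|^{2^*}\Big)^{1/(n-1)}\int_{\R_+^n}|\nabla(\eta|u|^\beta)|^2,
\]
which Step 1 makes small enough to be absorbed into the left-hand side. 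One then obtains the reverse-H\"older estimate
\[
\|\eta|u|^\beta\|_{L^{2n/(n-2)}(\R_+^n)}^2\leqslant C(\beta,r_0)\int_{\R_+^n}|\nabla\eta|^2|u|^{2\beta},
\]
and iteration over $\beta_j=(n/(n-2))^j$ on nested half-balls (with a standard tail argument for $\beta\to\infty$) yields the claimed $L^\infty$ bound.

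Away from the boundary each $u_k$ is harmonic, so the mean value inequality on balls of radius $t/2\subset\R_+^n$ together with Sobolev embedding gives
\[
|u_k(\bar x,t)|\leqslant Ct^{-(n-2)/2}\|u_k\|_{L^{2n/(n-2)}(\R_+^n)}\leqslant Ct^{-(n-2)/2}\|u_k\|_D,
\]
uniformly bounded for $t$ bounded away from $0$; combined with Step 2 this gives $\sup_k\|u_k\|_{L^\infty(\R_+^n)}<\infty$. The nonlinear Neumann data $|u_k|^{2/(n-2)}u_k$ is then uniformly in $L^\infty(\d\R_+^n)$, and a standard $C^{0,\alpha}$ bootstrap followed by $C^{1,\alpha}$-Schauder estimates up to the boundary (with interior gradient estimates $|\nabla u_k(\bar x,t)|\leqslant Ct^{-n/2}\|u_k\|_D$ coming from the mean-value identity for $\nabla u_k$ controlling the far field) delivers the uniform bound $\sup_k\|u_k\|_{C_B^1(\R_+^n)}<\infty$.

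The main technical obstacle is Step 2: at the critical Sobolev trace exponent the boundary nonlinear term is exactly borderline, and the constants produced by the Moser scheme depend on $\beta$, so the uniform $\varepsilon$-smallness from Step 1 is essential both for starting the iteration and for propagating it through the full range $\beta\in[1,\infty)$.
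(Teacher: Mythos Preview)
Your argument is sound, but it follows a very different path from the paper. The paper disposes of the lemma in a few lines by contradiction and rescaling: if $\|u_k\|_{L^\infty}$ were unbounded one picks maxima $x_k\in\partial\R_+^n$ with $s_k=|u_k(x_k)|\to\infty$, sets $v_k(x)=s_k^{-1}u_k\big(s_k^{-2/(n-2)}x+x_k\big)$, observes that $v_k$ still solves \eqref{eq:r+n} with $|v_k(0)|=1$ and is therefore locally $C^1$-compact, and then computes
\[
\int_{B_{r_k}(x_k)\cap\partial\R_+^n}|u_k|^{2^*}=\int_{B_1(0)\cap\partial\R_+^n}|v_k|^{2^*}\longrightarrow\int_{B_1(0)\cap\partial\R_+^n}|v|^{2^*}>0
\]
with $r_k=s_k^{-2/(n-2)}\to 0$, contradicting \eqref{eq:2^*bound}. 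The passage from $L^\infty$ to $C^1_B$ is then a black-box appeal to elliptic regularity, exactly as in your final step. Your route instead proves a direct $\varepsilon$-regularity theorem via Moser iteration on the critical trace nonlinearity; this is constructive and gives an explicit local bound $\|u\|_{L^\infty}\leqslant C\|u\|_D$, at the price of the usual Brezis--Kato bookkeeping. One small inaccuracy: the absorption constant at step $\beta$ behaves like $(2\beta-1)/\beta^2\to 0$, so the uniform smallness from your Step~1 cannot by itself carry the iteration through all $\beta$; rather, it is only needed for the first step, after which $|u|^{2/(n-2)}$ lies in $L^q(\partial\R_+^n)$ locally for some $q>n-1$ and the nonlinearity becomes subcritical, so the remaining iteration proceeds without smallness. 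With that adjustment your scheme is complete, and the paper's blow-up argument is simply a much shorter alternative.
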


\begin{proof}
	In view of $L^p$-estimates and Sobolev embedding theories, it is enough to prove that $\{u_k\}_{k\in \N}\subset D^{1,2}( \R_+^n)$ is bounded in $L^{\infty}(\R_+^n)$.
	Let us assume by contradiction that there exists a sequence of points $\{x_k\}\subset \partial \R_+^n$ such that $$s_k:=\|u_k\|_{L^{\infty}(\R^n_+)}=|u_k(x_k)|\rightarrow +\infty.$$ Let $v_k(x):=s_k^{-1}u_k\(s_k^{-\frac{2}{n-2}}x+x_k\).$ By the scaling invariance of equation \eqref{eq:r+n} we see that $\{v_k\}_{k\in \N}$ is a sequence of solutions to \eqref{eq:r+n} with $\|v_k\|_{L^{\infty}(\R^n_+)}=|v_k(0)|=1$ for all $k$. Therefore again in view of $L^p$-estimates and Sovolev embedding theories we know that $\{v_k\}_{k\in \N}$ is bounded in $C_B^1(\R_+^n)$. Let's assume that up to a subsequence, still denoted as $v_k$, $v_k\rightarrow v$ in $C(B_1(0)\cap \R_+^n)$. Let $r_k:=s_k^{-\frac{2}{n-2}}\rightarrow 0$ as $k\rightarrow +\infty$, we obtain using $\|v_k\|_{L^{\infty}(\R^n_+)}=|v_k(0)|=1$:
	$$\int_{B_{r_k}(x_k)\cap \partial \R_+^n}|u_k|^{2^*}=\int_{B_{1}(0)\cap \partial \R_+^n}|v_k|^{2^*}\rightarrow\int_{B_{1}(0)\cap \partial \R_+^n}|v|^{2^*}>0,$$ which is a contradiction to \eqref{eq:2^*bound}. This completes the proof.
\end{proof}

Now let us use Lemma~\ref{lemma:regularity} to obtain the following convergence result:

\begin{lemma}\label{lemma:inftybound}
	Let $\{u_k\}_{k\in \N}\subset D^{1,2}( \R_+^n)$ be a sequence of solutions to \eqref{eq:r+n} such that $\|u_k-w_k\|_D\rightarrow 0$ for some sequence $\{w_k\}_{k\in \N}\subset D^{1,2}(\R_+^n)\cap C_B^1(\R_+^n)$ which is bounded in $C_B^1(\R_+^n)$. Then $\|u_k-w_k\|_{L^{\infty}(\R^n_+)}\rightarrow 0$ as $k\rightarrow +\infty$.
\end{lemma}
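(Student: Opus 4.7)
The plan is to upgrade the uniform $C_B^1$ bound from $w_k$ to $u_k$ via Lemma~\ref{lemma:regularity}, and then convert a pointwise failure of convergence into a contradiction with the Sobolev embedding $D^{1,2}(\R_+^n)\hookrightarrow L^{2n/(n-2)}(\R_+^n)$.

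To apply Lemma~\ref{lemma:regularity} to $\{u_k\}$, the triangle inequality $\|u_k\|_D\leq\|u_k-w_k\|_D+\|w_k\|_D$ together with the hypotheses (reading $\{w_k\}$ as $D^{1,2}$-bounded in addition to $C_B^1$-bounded) yields $\sup_k\|u_k\|_D<+\infty$. To verify the non-concentration condition \eqref{eq:2^*bound}, for any $x_k\in\partial\R_+^n$ and $r_k\to 0$ I split
\[
\int_{B_{r_k}(x_k)\cap\partial\R_+^n}|u_k|^{2^*}\leq C\int_{\partial\R_+^n}|u_k-w_k|^{2^*}+C\|w_k\|_{L^\infty(\R_+^n)}^{2^*}\,|B_{r_k}(x_k)\cap\partial\R_+^n|.
\]
The first term tends to $0$ by the Sobolev trace inequality \eqref{ineq:sob} combined with $\|u_k-w_k\|_D\to 0$, and the second is $\OO(r_k^{n-1})\to 0$. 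Hence Lemma~\ref{lemma:regularity} furnishes $\sup_k\|u_k\|_{C_B^1(\R_+^n)}<+\infty$, and combined with the hypothesis on $\{w_k\}$ we obtain a uniform Lipschitz constant $L$ for the family $\{u_k-w_k\}$.

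Now suppose for contradiction that $\|u_k-w_k\|_{L^\infty(\R_+^n)}\not\to 0$. Passing to a subsequence, there exist $\delta>0$ and points $x_k\in\R_+^n$ with $|(u_k-w_k)(x_k)|\geq\delta$. The uniform Lipschitz bound forces $|u_k-w_k|\geq\delta/2$ throughout $B_r(x_k)\cap\R_+^n$, where $r:=\delta/(2L)$ is independent of $k$. Since the Lebesgue measure of this set is bounded below by half the volume of $B_r$, a uniform lower bound $\|u_k-w_k\|_{L^{2n/(n-2)}(\R_+^n)}\geq c(\delta)>0$ follows, contradicting $\|u_k-w_k\|_{L^{2n/(n-2)}(\R_+^n)}\leq C\|u_k-w_k\|_D\to 0$. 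The main obstacle is the first step: verifying the non-concentration hypothesis of Lemma~\ref{lemma:regularity} uniformly in $(x_k,r_k)$; thereafter the conversion of pointwise information into $L^p$-mass incompatible with $D^{1,2}$-smallness is routine.
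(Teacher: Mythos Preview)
Your argument is correct and shares the first step with the paper: both verify the non-concentration hypothesis of Lemma~\ref{lemma:regularity} by splitting $u_k=(u_k-w_k)+w_k$, controlling the first piece via the trace Sobolev inequality and the second via the $L^\infty$ bound on $w_k$ together with $r_k\to 0$, thereby obtaining a uniform $C_B^1$ bound on $\{u_k\}$. Your parenthetical remark about reading $\{w_k\}$ as $D^{1,2}$-bounded is a fair observation; the paper's proof tacitly makes the same assumption (it is satisfied in the only application, Proposition~\ref{prop:approximation}).

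The second half genuinely diverges. The paper translates by the bad points $z_k$, uses the $C_B^1$ bounds and Arzel\`a--Ascoli to extract $C_{loc}$ limits $u,w$ with $u(0)\neq w(0)$, and then derives a contradiction from $\|\bar u_k-\bar w_k\|_D\to 0$ forcing $L^1_{loc}$-convergence to zero. You instead exploit the uniform Lipschitz bound directly: a single bad point propagates to a lower bound on $|u_k-w_k|$ over a fixed-size half-ball, which yields a uniform positive lower bound on $\|u_k-w_k\|_{L^{2n/(n-2)}(\R_+^n)}$, contradicting the interior Sobolev embedding $D^{1,2}(\R_+^n)\hookrightarrow L^{2n/(n-2)}(\R_+^n)$. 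Your route is more elementary---it avoids translating, passing to subsequences, and extracting limits---and it sidesteps the mild technicality in the paper's argument about how the translated functions sit in $D^{1,2}(\R_+^n)$ when $z_k$ is off the boundary. The paper's approach, on the other hand, is closer in spirit to standard blow-up/compactness arguments and would adapt more readily if one needed information about the limiting profile. One cosmetic point: you use $L$ for the Lipschitz constant, which clashes with the constant $L$ defined in \eqref{eq:L}; rename it to avoid confusion.
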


\begin{proof}
	Observe that for any sequence $\{x_k\}\subset \partial \R_+^n$, $r_k>0$, $r_k\rightarrow 0$ we have by Sobolev inequality \eqref{ineq:sob}:
	\begin{align*}
	\(\int_{B_{r_k}(x_k)\cap \partial \R_+^n}|u_k|^{2^*}\)^{\frac{1}{2^*}}&\leqslant \|u_k-w_k\|_{L^{2^*}(\partial \R_+^n)}+\(\int_{B_{r_k}(x_k)\cap \partial \R_+^n}|w_k|^{2^*}\)^{\frac{1}{2^*}}
	\\&\leqslant \|u_k-w_k\|_{L^{2^*}(\partial \R_+^n)}+o(1)\|w_k\|_{L^{\infty}(\R^n_+)}
	\\&\rightarrow 0.
	\end{align*}
	Thus it follows from Lemma~\ref{lemma:regularity} that $\{u_k\}_{k\in \N}$ is bounded in $C_B^1(\R_+^n)$. Assume by contradiction that for a subsequence, there exist points $\{z_k\}\subset \R_+^n$ such that
	\begin{equation}\label{eq:positive inf}
	\liminf_{k\rightarrow+\infty}|u_k(z_k)-w_k(z_k)|>0.
	\end{equation}
	Define $\bar{u}_k(x)=u_k(x+z_k)$ and $\bar{w}_k(x)=w_k(x+z_k)$, then it is easy to see that $\{\bar{u}_k\}_{k\in \N}$ and $\{\bar{w}_k\}_{k\in \N}$ are uniformly bounded in $C_B^1(\R_+^n)$. Moreover, $\|\bar{u}_k-\bar{w}_k\|_{D}\rightarrow 0$ in $\R_+^n$. 
	We may assume without loss of generality that $\bar{u}_k\rightarrow u$ in $C_{loc}(\R_+^n)$ and  $\bar{w}_k\rightarrow w$ in $C_{loc}(\R_+^n)$. Thus we have by \eqref{eq:positive inf}:
	$$|u(0)-w(0)|>0.$$ Therefore, we have that 
	$$\lim_{k\rightarrow+\infty}\int_{B_1(0)\cap \R_+^n} |\bar{u}_k(x)-\bar{w}_k(x)|=\int_{B_1(0)\cap \R_+^n} |u(x)-w(x)|>0.$$
	This contradicts the fact that $\|\bar{u}_k-\bar{w}_k\|_D\rightarrow 0$ in $\R_+^n$, hence $\bar{u}_k-\bar{w}_k\rightarrow 0$ in $L^1_{loc}(\R_+^n)$, therefore completing the proof of the lemma.
\end{proof}

The next lemma establishes certain transformation properties of the standard bubbles $U(\varepsilon,y)$. 
\begin{lemma}\label{lemma:trans}
	Let $\varepsilon, \varepsilon'>0$ and $y, y'\in \R^{n-1}$. Then there exists a transformation $T\in \Gamma$ such that 
	$$TU(\varepsilon, y)=U(1,z),$$
	$$TU(\varepsilon', y')=U(1,-z),$$
	where $z=(z_1,0,\cdot\cdot\cdot,0)\in \R^{n-1}$ for some $z_1>0$.
\end{lemma}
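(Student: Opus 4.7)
The plan is to produce $T \in \Gamma$ as an explicit composition of the four types of generators displayed just before the lemma, using the listed transformation identities for $U(\varepsilon, y)$ to track the pair of bubbles step by step. Since elements of $\Gamma$ are by definition finite compositions of those four moves, it is enough to exhibit such a composition.

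First I would normalize the first bubble. The translation corresponding to $y' = -y$ sends $U(\varepsilon, y) \mapsto U(\varepsilon, 0)$ and $U(\varepsilon', y') \mapsto U(\varepsilon', y' - y)$, and then the rescaling by $1/\varepsilon$ turns these into $U(1, 0)$ and $U(a, \tilde{y})$ with $a := \varepsilon'/\varepsilon > 0$ and $\tilde{y} := (y' - y)/\varepsilon \in \R^{n-1}$. I would then apply an orthogonal transformation $A \in O(n-1)$ sending $\tilde{y}$ to $|\tilde{y}| e_1$, where $e_1 = (1, 0, \ldots, 0)$. This fixes $U(1, 0)$ (since $A \cdot 0 = 0$) and sends the second bubble to $U(a, b e_1)$ with $b := |\tilde{y}| \geqslant 0$.

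The remaining task is essentially two-dimensional: using only translations along $e_1$, rescalings, and the Kelvin transformation, all of which preserve the half-plane $\{(y_1, t) : t > 0\}$ (identified with the $(x_1, x_n)$-plane in the ambient space), I must send the pair $P_1 = (0, 1)$ and $P_2 = (b, a)$ to the symmetric pair $(z_1, 1)$ and $(-z_1, 1)$ for some $z_1 > 0$. These three generators span the full isometry group of the hyperbolic upper half-plane, which acts transitively on ordered pairs of distinct points at any prescribed hyperbolic distance. Since $\cosh^{-1}(1 + 2z_1^2)$ sweeps out $(0, \infty)$ as $z_1$ ranges over $(0, \infty)$, the value $z_1^2 = (b^2 + (a - 1)^2)/(4a)$ matches the hyperbolic distance between $P_1$ and $P_2$, and the required isometry therefore exists.

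The main obstacle I expect is writing this isometry explicitly as a composition of the generators, since a direct calculation shows that a single translation plus a single Kelvin (plus rescaling) cannot simultaneously equalize the heights and symmetrize the centers of the two images when $a \neq 1$ and $b > 0$. The easy sub-cases are $a = 1$, where a single translation by $-(b/2) e_1$ already works with $z_1 = b/2$, and $b = 0$ with $a \neq 1$, where one translates along $e_1$, applies Kelvin to separate the two centers, and then translates and rescales. In the generic case one can instead translate and rescale so that the semicircular geodesic through $P_1$ and $P_2$ becomes the unit upper semicircle centered at the origin, then apply a hyperbolic rotation about $(0, 1)$ (itself a composition of translate-Kelvin-translate) to move the two images to symmetric angular positions on that semicircle, and finally rescale so that both images land on the line $t = 1$.
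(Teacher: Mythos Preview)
Your argument is correct. The key observation---that the action of $\Gamma$ on the bubble parameters $(y,\varepsilon)\in\R^{n-1}\times(0,\infty)$ is precisely the action of the isometry group of hyperbolic $n$-space in the upper half-space model, and hence is transitive on ordered pairs of points at a given hyperbolic distance---disposes of the lemma in one conceptual stroke. The formula $z_1^2=(b^2+(a-1)^2)/(4a)$ is right, and the generation claim for $\mathrm{Isom}(\mathbb{H}^2)$ by translations along $e_1$, dilations, and the inversion $z\mapsto 1/\bar z$ is a standard fact you could cite rather than re-derive.

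The paper takes a more computational route. After the same initial normalization (translate, rescale, rotate) it splits into the case $\varepsilon=\varepsilon'$, handled by a final translation and rotation, and the case $\varepsilon\neq\varepsilon'$, where it applies a single translate-then-Kelvin $K\circ\bar y$ and shows directly that the quadratic equation
\[
\frac{1}{1+\bar y_1^2}=\frac{\varepsilon'}{(\varepsilon')^2+(\bar y_1+y_1')^2}
\]
in $\bar y_1$ has a real root; this equalizes the two $\varepsilon$-parameters and reduces to the first case. So the ``obstacle'' you anticipate is not actually present: one translate-Kelvin suffices to equalize the heights (it cannot simultaneously symmetrize the centers, as you observed, but a further translation does that). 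Your hyperbolic-geometry viewpoint explains \emph{why} the construction must succeed, and yields the value of $z_1$ for free; the paper's argument is more elementary and self-contained but hides the underlying reason. Either way, note that both arguments tacitly require $(\varepsilon,y)\neq(\varepsilon',y')$ in order to get $z_1>0$.
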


\begin{proof}
	We divide the proof into two cases:
	\begin{enumerate}
		\item[Case 1]: $\varepsilon=\varepsilon'$. After rescaling we may assume that $\varepsilon=\varepsilon'=1$. Then we can find suitable translation and rotation that transform $y$ and $y'$ into $z=(z_1,0,\cdot\cdot\cdot,0)$ and $-z=(-z_1,0,\cdot\cdot\cdot,0)$ for some $z_1>0$. Thus there exists a transformation $T$ in $\Gamma$ as required.
		\item[Case 2]: $\varepsilon\neq \varepsilon'$. Using proper rescaling, translation and rotation applied to both $U(\varepsilon, y)$ and $U(\varepsilon', y')$, we may assume that $(\varepsilon,y)=(1,0)$, $\varepsilon'>0$ and $y'=(y'_1,0,\cdot\cdot\cdot,0)$ for some $y'_1\in \R$. Apply a transformation of the form $K\circ \bar{y}$ where $K$ is the Kelvin transformation and $\bar{y}=(\bar{y}_1,0,\cdot\cdot\cdot,0)$ for some $\bar{y}_1\in \R$, and use the invariance properties of transformations in Section 2 to obtain
		$$(K\circ \bar{y})*U(1,0)=U\(\frac{1}{1+\bar{y}_1^2},\frac{\bar{y}}{1+\bar{y}_1^2}\),$$
		$$(K\circ \bar{y})*U(\varepsilon',y')=U\(\frac{\varepsilon'}{(\varepsilon')^2+(\bar{y}_1+y_1')^2},\frac{y'+\bar{y}}{(\varepsilon')^2+(\bar{y}_1+y_1')^2}\).$$
		It is elementary to see that 
		$$\frac{1}{1+\bar{y}_1^2}=\frac{\varepsilon'}{(\varepsilon')^2+(\bar{y}_1+y_1')^2}$$ 
		has a real solution $\bar{y}_1\in \R$. Let $\bar{y}_1$ be such a solution and set $\frac{1}{1+\bar{y}_1^2}=\varepsilon_1>0$. Then from above we know that there exists a transformation $T\in \Gamma$ such that $$TU(\varepsilon,y)=U(\varepsilon_1,\theta),$$
		$$TU(\varepsilon',y')=U(\varepsilon_1,\theta'),$$ for some $\theta, \theta'\in \R^{n-1}$.
		Now we can proceed as in Case 1 to complete our proof.
	\end{enumerate}
\end{proof}

\section{Energy lower bound}

We first prove Proposition~\ref{prop:lb} which states that any sign-changing solution to \eqref{eq:r+n} has at least twice the energy of the standard bubbles.

\begin{proof}[Proof of Proposition~\ref{prop:lb}]
	Let $u$ be a sign-changing solution to \eqref{eq:r+n}. Testing $u^{\pm}$ to the equation and using Sobolev trace inequality \eqref{ineq:sob}, we get
	\begin{align*}
	0&=\int_{\R_+^n}(-\Delta u)u^{\pm}=\int_{\R_+^n}|\nabla u^{\pm}|^2-\int_{\partial \R_+^n}|u^{\pm}|^{2^*}\\ &\geqslant (1-S_n^{-\frac{2^*}{2}}||u^{\pm}||_{D}^{2^*-2})||u^{\pm}||^2_{D},
	\end{align*}
	where $2^*=\frac{2(n-1)}{n-2}$ is the critical exponent of the Sobolev trace embedding defined as before. In particular, $\|u^{\pm}\|_{L^{2^*}(\partial \R_+^n)}^{2^*}=\|u^{\pm}\|^2_{D}\neq 0$ and $\|u^{\pm}\|^2_{D}\geqslant S_n^{n-1}$, we have
	\begin{equation*}
	I(u^{\pm})=\frac{1}{2(n-1)}||u^{\pm}||_{D}^2\geqslant \frac{1}{2(n-1)}S_n^{n-1}.
	\end{equation*}
	We claim that it must hold:
	\begin{equation*}
	I(u^{\pm})> \frac{1}{2(n-1)}S_n^{n-1}.
	\end{equation*}
	If this is not the case, let us assume that
	\begin{equation*}
	I(u^+)=\frac{1}{2(n-1)}S_n^{n-1},
	\end{equation*}
	then $u^+$ must be a solution to equation \eqref{eq:r+n}. It follows from maximum principle that $u^+>0$ which contradicts the fact that $u$ is sign-changing. Therefore we must have 
	\begin{equation*}
	I(u)=I(u^+)+I(u^-)>\frac{1}{n-1}S_n^{n-1}.
	\end{equation*}
	This ends the proof of Proposition~\ref{prop:lb}.
\end{proof}

In the rest of this section, we will prove the sharper energy bound of Theorem~\ref{thm:lb}. Let us first introduce some notations. Denote 
$$
H_s:=\{x\in \R_+^n: x_1>s, \ s\in \R\},
$$ 
$\partial'H_s:=\{x\in \partial \R_+^n: x_1>s\}$ and $\partial^+H_s:=\{x\in \partial H_s: x_1=s\}$. Denote also $r_s(x)$ to be the reflection of a point $x\in \R^n_+$ about the hyperplane $\{x_1=s\}$. Set 
\begin{equation}\label{eq:L}
L:=2^{-\frac{n}{n-2}}\Big(\frac{n-2}{n}S_n\Big)^{n-1},
\end{equation}
where $S_n$ is the Sobolev best constant as before.

The proof of Theorem~\ref{thm:lb} is based on the following two propositions. First, we use a variation of the moving planes method to prove the Proposition~\ref{prop:nonexistence} which establishes the non-existence of sign-changing solutions to \eqref{eq:r+n} of a special shape.

\begin{proposition}\label{prop:nonexistence}
	Let $H:=H_0$ be as defined before and $\Omega$ be a non-empty compact subset of $\partial'H$. Then there is no sign-changing solution $u\in D^{1,2}(\R_+^n)$ to \eqref{eq:r+n} such that the following three conditions hold:
	\begin{enumerate}
		\item $u(x)>u(r_0(x))$ for $x\in \Omega$,
		\item $\int_{\partial \R_+^n \setminus \{\Omega\cup r_0(\Omega)\}} |u|^{2^*}<L,$
		\item $\inf_{\partial'H}u>\inf_{\partial \R_+^n\setminus\partial'H}u.$
	\end{enumerate}
\end{proposition}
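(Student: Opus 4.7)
The plan is a moving-planes argument in the $x_1$-direction. For each $\lambda\in\R$, set $u_\lambda(x):=u(r_\lambda(x))$ and $w_\lambda:=u-u_\lambda$ on $\overline{H_\lambda}$. Since $u$ is harmonic and $r_\lambda$ is an isometry, $w_\lambda$ is harmonic in $H_\lambda$, vanishes on the reflection hyperplane $\d^+H_\lambda$, and on $\d'H_\lambda$ satisfies the Robin-type condition $\partial_\nu w_\lambda+b_\lambda w_\lambda=0$ with $b_\lambda:=(f(u)-f(u_\lambda))/(u-u_\lambda)\ge 0$, where $f(t):=|t|^{2/(n-2)}t$; the mean value theorem yields $b_\lambda\le\tfrac{n}{n-2}\max(|u|,|u_\lambda|)^{2/(n-2)}$. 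Testing against $w_\lambda^-$ (extended by zero to $\Rn$), integrating by parts, splitting $E_\lambda:=\{x\in\d'H_\lambda:w_\lambda(x)<0\}$ according to whether $|u|$ or $|u_\lambda|$ dominates, and applying H\"older together with the Sobolev trace inequality \eqref{ineq:sob} yields the core estimate
$$\int_{H_\lambda}|\nabla w_\lambda^-|^2 \le \tfrac{n}{n-2}\,c_n\,S_n^{-1}\Bigl(\int_{E_\lambda\cup r_\lambda(E_\lambda)}|u|^{2^*}\Bigr)^{1/(n-1)}\int_{H_\lambda}|\nabla w_\lambda^-|^2$$
for a dimensional constant $c_n$. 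The value of $L$ in \eqref{eq:L} is calibrated precisely so that an integral $<L$ on the right forces the coefficient to be $<1$, and hence $w_\lambda^-\equiv 0$.

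I would first apply this at $\lambda=0$. Condition~(1) gives $w_0>0$ on $\Omega$, so $E_0\subset\d'H\setminus\Omega$ and $r_0(E_0)\subset(\d\Rn\setminus\d'H)\setminus r_0(\Omega)$; these lie in $\d\Rn\setminus(\Omega\cup r_0(\Omega))$ and are disjoint, and (2) bounds $\int_{E_0\cup r_0(E_0)}|u|^{2^*}<L$. The estimate then forces $w_0\ge 0$ on $H$. Since (1) rules out $w_0\equiv 0$, the strong maximum principle yields $w_0>0$ in $H$, and Hopf's lemma at $\d^+H$ (and at $\d'H$, via the Robin condition) upgrades this to $w_0>0$ on $\d'H$ and to the strict monotonicity $\partial_{x_1}u(0,\cdot)>0$ along $\{x_1=0\}\cap\d\Rn$.

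It remains to continue the moving plane past $\lambda=0$. Set $\lambda^*:=\inf\{\lambda\le 0:w_\mu\ge 0\text{ on }H_\mu\ \forall\,\mu\in[\lambda,0]\}$. If $\lambda^*=-\infty$, then $u(x_1,x')\ge u(z_1,x')$ for every pair $x_1>z_1$ with $x_1+z_1\le 0$; fixing $x_1$ and letting $z_1\to-\infty$, the decay of $u\in D^{1,2}(\Rn)$ at infinity gives $u(x_1,x')\ge 0$ for every $(x_1,x')$, whence $u\ge 0$ on $\d\Rn$ and, by the maximum principle, on $\Rn$, contradicting the sign-changing hypothesis. If $\lambda^*>-\infty$, then by continuity $w_{\lambda^*}\ge 0$, and the standard moving-planes alternative (strict positivity $w_{\lambda^*}>0$ would propagate below $\lambda^*$ via continuity of the integral estimate, contradicting minimality) forces $w_{\lambda^*}\equiv 0$, i.e., $u$ is symmetric about $\{x_1=\lambda^*\}$. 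The subcase $\lambda^*=0$ contradicts (1) directly; the subcase $\lambda^*<0$ is ruled out by combining the symmetry with the strict Hopf-monotonicity of Step~2 (which by symmetry forces $\partial_{x_1}u(2\lambda^*,\cdot)<0$) and condition~(3), obtaining an incompatible constraint on the location of the infimum of $u$.

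The main obstacle is the continuation step: propagating the integral estimate to $\lambda\in[\lambda^*,0)$ so that the bad set $E_\lambda\cup r_\lambda(E_\lambda)$ stays essentially inside $\d\Rn\setminus(\Omega\cup r_0(\Omega))$ as $\lambda$ decreases. This relies on the strict positivity $w_0>0$ on $\d'H$ secured in Step~2, on the compactness of $\Omega$, and on condition~(3), which localizes the negative mass of $u$ strictly inside $\{x_1<0\}$ and thereby ensures the estimate survives past $\lambda=0$.
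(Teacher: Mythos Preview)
Your overall strategy---moving planes in $x_1$ after establishing $w_0\ge 0$ via the integral estimate calibrated by $L$---is the same as the paper's, and your initial step (testing $w_0^-$, using H\"older and the trace Sobolev inequality so that the constant $L$ makes the coefficient $<1$) is essentially identical. The difference is the \emph{direction} in which you slide the plane: you move it to the left ($\lambda\le 0$), while the paper moves it to the right, defining $\bar s=\sup\{s\ge 0:w_s\ge 0\text{ on }\partial'H_s\}$. This choice of direction is not cosmetic; it is what leaves the two gaps in your proposal.

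\textbf{The continuation step.} You correctly flag this as the main obstacle but do not resolve it, and the mechanism you sketch---keeping $E_\lambda\cup r_\lambda(E_\lambda)$ inside the fixed set $\partial\Rn\setminus(\Omega\cup r_0(\Omega))$ as $\lambda$ varies---cannot work: once $\lambda$ moves away from $0$, the reflected set $r_\lambda(E_\lambda)$ has no reason to avoid $r_0(\Omega)$. The correct device, which the paper uses, is to discard $\Omega$ entirely at the critical parameter and pick a \emph{new} large compact $\Omega_1\subset\partial'H_{\bar s}$ on which $w_{\bar s}>0$ and such that $\int_{\partial\Rn\setminus(\Omega_1\cup r_{\bar s}(\Omega_1))}|u|^{2^*}<L$; this is always possible because $|u|^{2^*}\in L^1(\partial\Rn)$. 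Continuity then pushes the plane slightly past $\bar s$, contradicting maximality. The same trick would rescue your leftward scheme, but what you wrote does not.

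\textbf{Ruling out symmetry.} In the paper's rightward scheme, $\bar s<\infty$ (since $u>0$ somewhere and decays) and $\bar s\ge 0$, so condition~(3) kills $w_{\bar s}\equiv 0$ in one line:
\[
\inf_{\partial'H_{\bar s}}u\;\ge\;\inf_{\partial'H}u\;>\;\inf_{\partial\Rn\setminus\partial'H}u\;\ge\;\inf_{\partial\Rn\setminus\partial'H_{\bar s}}u.
\]
In your leftward scheme you must instead handle $\lambda^*=-\infty$ (your decay argument is fine) and $\lambda^*\in(-\infty,0)$ with $w_{\lambda^*}\equiv 0$. For the latter you invoke ``Hopf monotonicity and condition~(3)'' without an argument; I do not see how (3) alone closes this case. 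A valid route does exist---combine the strict inequality $w_0>0$ with the symmetry $u(\cdot)=u(2\lambda^*-\cdot)$ to get $u(y_1+2\lambda^*,x')>u(y_1,x')$ for all $y_1<0$, iterate, and use decay to force $u<0$ on all of $\partial\Rn$, contradicting sign-change---but that is not what you wrote. Moving to the right, as the paper does, sidesteps this entirely.
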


\begin{proof}
	Let us prove by contradiction. Assume that there exists a sign-changing solution $u$ to equation \eqref{eq:r+n} that satisfies conditions 1-3. Consider $w_s:=u-u\circ r_s$ where $r_s$ is the reflection about the hyperplane $\{x_1=s\}$. A direct calculation gives that $w_s$ satisfies:
	\begin{align}\label{eq:w}
	\begin{cases}
	\Delta w_s=0 \ & \text{in} \ \R^n_+\\
	\frac{\partial w_s}{\partial \nu}+V_sw_s=0\ & \text{on} \ \partial \R_+^n,
	\end{cases}
	\end{align}
	where $0\leqslant V_s \leqslant (2^*-1)(|u|+|u\circ r_s|)^{2^*-2}.$

Writing $w_0^-=max\{-w_0,0\}$, we have in particular that
\begin{align*}
	\begin{cases}
	\Delta w_0^-=0 \ & \text{in} \ \R^n_+\\
	\frac{\partial w_0^-}{\partial \nu}+V_0w_0^-=0\ & \text{on} \ \partial \R_+^n.
	\end{cases}
	\end{align*}
Since $w_0^-=0$ on $\partial^{+} H$ and in a neighbourhood of $\Omega$, an integration by parts gives
\begin{align*}
	|| w_0^-||^2_{D}=\int_{\partial 'H\setminus \Omega}V_0(w_0^-)^2
\leqslant ||V_0||_{L^{n-1}(\partial 'H\setminus \Omega)}||w_0^-||_{L^{2^*}(\partial 'H\setminus \Omega)}^2.
	\end{align*}
	On the other hand, by condition 2 we obtain
	\begin{align*}
	\int_{\partial'H \setminus \Omega}|V_0|^{n-1}&\leqslant (2^*-1)^{n-1}\int_{\partial'H \setminus \Omega}(|u|+|u\circ r_0|)^{2^*}\\& \leqslant (2^*-1)^{n-1}2^{2^*-1}\int_{\partial'\R_+^n \setminus \{\Omega\cup (r_0(\Omega))\}}|u|^{2^*}\\&< (2^*-1)^{n-1}2^{2^*-1} L.
	\end{align*}
So, by the Sobolev trace inequality \eqref{ineq:sob},
\begin{equation}\label{ineq:L}
\begin{aligned}
	S_n||w_0^-||^2_{L^{2^*}(\partial 'H\setminus \Omega)}&\leqslant ||V_0||_{L^{n-1}(\partial 'H\setminus \Omega)}||w_0^-||_{L^{2^*}(\partial 'H\setminus \Omega)}^2
\\&<((2^*-1)^{n-1}2^{2^*-1}L)^{\frac{1}{n-1}}||w_0^-||_{L^{2^*}(\partial 'H\setminus \Omega)}^2.
\end{aligned}
\end{equation}
	Since $((2^*-1)^{n-1}2^{2^*-1}L)^{\frac{1}{n-1}}=S_n$ by the definition of $L$, we get
	$w_0^-\equiv 0$ on $\partial' H\setminus \Omega$. In particular, $w_0\geqslant 0$ in $\partial' H$. Set 
	$$\bar{s}=\sup\{s\in \R: w_s\geqslant 0 \ \text{in}\  \partial' H_s\}\geqslant 0.$$
	Since $u>0$ somewhere on $\partial \R_+^n$ and $u\rightarrow 0$ as $|x|\rightarrow +\infty$, $w_s<0$ for some $x$ in $\partial'H_s$ for large $s$. Thus $\bar{s}<+\infty$. By continuity $w_{\bar{s}}\geqslant 0$ in $\partial' H_{\bar{s}}$. Since $w_{\bar{s}}=0$ on $\partial^+H_{\bar{s}}$, it follows from the strong maximum principle that either $w_{\bar{s}}\equiv0$ or $w_{\bar{s}}>0$ in $\partial' H_{\bar{s}}$.  But $w_{\bar{s}}\equiv0$ is impossible because condition 3 implies that
	$$\inf_{\partial'H_{\bar{s}}}u\geqslant\inf_{\partial'H}u>\inf_{\partial \R_+^n\setminus\partial'H}u\geqslant\inf_{\partial\R^n_+\setminus\partial'H_{\bar{s}}}u.$$
	Thus we have $w_{\bar{s}}>0$ on $\partial' H_{\bar{s}}$. Choose a sufficiently large compact set $\Omega_1\subset\partial' H_{\bar{s}}$ such that 
	$$\int_{\partial \R_+^n \setminus \{\Omega_1\cup r_{\bar{s}}(\Omega_1)\}} |u|^{2^*}<L.$$
We can also choose  $s_1>\bar{s}$ close to $\bar{s}$ such that $\Omega_1\subset\partial' H_{s_1}$, $w_{s_1}>0$ in $\Omega_1$ and 
	$$\int_{\partial \R_+^n \setminus \{\Omega_1\cup r_{s_1}(\Omega_1)\}} |u|^{2^*}<L.$$
Using an inequality similar to \eqref{ineq:L} and arguing as before we conclude that $w_{s_1}\geqslant 0$ on $\partial'H_{s_1}$. This contradicts the definition of $\bar{s}$. Thus we have finished the proof.
\end{proof}

The second part of the proof of Theorem~\ref{thm:lb} is the following approximation result:

\begin{proposition}\label{prop:approximation}
	Let $\{u_k\}_{k\in \N}$ be a sequence of sign-changing solutions of \eqref{eq:r+n} such that $I(u_k)\rightarrow \frac{1}{n-1}S^{n-1}$. Then there exists a sequence of transformations $\{T_k\}_{k\in \N}$ in $\Gamma$ and positive numbers $\{z_{k,1}\}_{k\in \N}$ such that, up to a subsequence, $z_{k,1}\rightarrow +\infty$, $$||T_ku_k-U(1,z_k)+U(1,-z_k)||_D\rightarrow 0,$$ and 
	$$||T_ku_k-U(1,z_k)+U(1,-z_k)||_{L^{\infty}(\R_+^n)}\rightarrow 0,$$ where $z_k=(z_{k,1},0,\cdot\cdot\cdot, 0)\in \R^{n-1}$.
\end{proposition}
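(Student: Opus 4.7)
The plan is to combine a bubble-decomposition for the sequence $\{u_k\}$ with the explicit normalization provided by Lemma~\ref{lemma:trans}. Since each $u_k$ solves \eqref{eq:r+n}, an integration by parts gives $\|u_k\|_D^2=\int_{\partial\R_+^n}|u_k|^{2^*}$, so $I(u_k)=\tfrac{1}{2(n-1)}\|u_k\|_D^2$ and $\|u_k\|_D^2\to 2S_n^{n-1}$; the computation in the proof of Proposition~\ref{prop:lb} also gives $\|u_k^\pm\|_D^2\geqslant S_n^{n-1}$, hence $\|u_k^\pm\|_D^2\to S_n^{n-1}$. I would next invoke the natural analog of Theorem~\ref{struwe_thm} for exact solutions on the half-space: after passing to a subsequence, there exist an integer $m\geqslant 0$, non-trivial solutions $\phi^{(1)},\ldots,\phi^{(m)}\in D^{1,2}(\R_+^n)$ of \eqref{eq:r+n}, and transformations $T_{k,j}\in\Gamma$ such that
\begin{equation*}
\Bigl\|u_k-\sum_{j=1}^{m} T_{k,j}\phi^{(j)}\Bigr\|_D\to 0, \qquad \sum_{j=1}^{m} I(\phi^{(j)})=\tfrac{1}{n-1}S_n^{n-1}.
\end{equation*}

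Every non-trivial solution of \eqref{eq:r+n} satisfies $I\geqslant\tfrac{1}{2(n-1)}S_n^{n-1}$ (the standard bubble energy), while any sign-changing one satisfies $I>\tfrac{1}{n-1}S_n^{n-1}$ by Proposition~\ref{prop:lb}. The energy identity forces $m=2$ with each $\phi^{(j)}$ a standard bubble. These two bubbles must carry opposite signs, for otherwise one of $\|u_k^\pm\|_D$ would tend to zero, contradicting $\|u_k^\pm\|_D^2\geqslant S_n^{n-1}$. Using the transformation identities from Section~2, I then write $T_{k,1}\phi^{(1)}=U(\alpha_k,\xi_k)$ and $T_{k,2}\phi^{(2)}=-U(\beta_k,\eta_k)$ for suitable parameters, and apply Lemma~\ref{lemma:trans} to produce a single $T_k\in\Gamma$ that maps these two bubbles respectively to $U(1,z_k)$ and $U(1,-z_k)$ with $z_k=(z_{k,1},0,\ldots,0)$ and $z_{k,1}>0$. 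Since $\Gamma$ preserves the Dirichlet norm, $\|T_k u_k-U(1,z_k)+U(1,-z_k)\|_D\to 0$.

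To prove $z_{k,1}\to+\infty$ I argue by contradiction. If $z_{k,1}\to z_\ast\in[0,+\infty)$ along a subsequence, then $w_k:=U(1,z_k)-U(1,-z_k)$ converges in $D^{1,2}$ to $v:=U(1,z^\ast)-U(1,-z^\ast)$ with $z^\ast=(z_\ast,0,\ldots,0)$, so $T_k u_k\to v$ in $D^{1,2}$. Passing to the limit in the weak formulation of \eqref{eq:r+n}, which is legitimate because the Nemitskii map $u\mapsto|u|^{2/(n-2)}u$ is continuous from $L^{2^*}(\partial\R_+^n)$ to $L^{2^*/(2^*-1)}(\partial\R_+^n)$, shows that $v$ is a weak solution. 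But $z_\ast=0$ gives $v\equiv 0$, contradicting $\|T_k u_k\|_D^2\to 2S_n^{n-1}$, while $z_\ast>0$ makes $v$ a sign-changing weak solution with $I(v)=\lim I(u_k)=\tfrac{1}{n-1}S_n^{n-1}$, contradicting the strict inequality of Proposition~\ref{prop:lb}. Thus $z_{k,1}\to+\infty$. Finally, $\{w_k\}$ is uniformly bounded in $C_B^1(\R_+^n)$ because each summand is a translate of the fixed bubble $U(1,0)$, so Lemma~\ref{lemma:inftybound} upgrades the $D^{1,2}$-convergence of $T_k u_k-w_k$ to convergence in $L^\infty(\R_+^n)$.

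The main difficulty I anticipate is setting up the bubble decomposition rigorously on $\R_+^n$: although it is modeled on Theorem~\ref{struwe_thm}, the non-compact action of $\Gamma$ on $D^{1,2}(\R_+^n)$ requires careful iterative extraction of bubbles at many different scales and centers, together with the asymptotic orthogonality of the extracted transformations needed to split the energy additively. Once this decomposition is available, the remaining reductions follow essentially from the strict energy bound of Proposition~\ref{prop:lb} combined with Lemmas~\ref{lemma:trans} and~\ref{lemma:inftybound}.
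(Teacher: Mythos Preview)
Your argument is correct, but it takes a heavier route than the paper's. The paper does not invoke a Struwe-type global decomposition of $u_k$ at all. Instead, having established $\|u_k^{\pm}\|_D^2=\|u_k^{\pm}\|_{L^{2^*}(\partial\R_+^n)}^{2^*}\to S_n^{n-1}$, it observes that this forces the Sobolev quotients $\|u_k^{\pm}\|_D^2/\|u_k^{\pm}\|_{L^{2^*}}^2$ to converge to the best constant $S_n$, so each of $u_k^+$ and $u_k^-$ is an optimizing sequence for the trace inequality. The classical Lions--Escobar concentration-compactness then gives, for each sign separately, a single bubble $U(\varepsilon_k^{(j)},y_k^{(j)})$ with $\|u_k^{\pm}-U(\varepsilon_k^{(j)},y_k^{(j)})\|_D\to 0$. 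From there the paper proceeds exactly as you do: apply Lemma~\ref{lemma:trans}, then Lemma~\ref{lemma:inftybound}, then rule out bounded $z_{k,1}$ via Proposition~\ref{prop:lb}.

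The practical difference is that the paper's route gives the two bubbles with the correct signs for free and needs only the optimizer characterization, whereas your route requires setting up the iterative bubble extraction on $\R_+^n$ and then arguing separately that $m=2$, that each profile is a standard bubble, and that the two carry opposite signs. Your sign argument (``otherwise one of $\|u_k^{\pm}\|_D$ would tend to zero'') is morally right but relies on the asymptotic separation of the extracted profiles to conclude that a sum of two positive bubbles has vanishing negative part; this is true in the Struwe framework but is exactly the kind of detail you flag as the main difficulty. The paper's approach sidesteps all of this by splitting into $u_k^+$ and $u_k^-$ at the outset.
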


\begin{proof}
	For all $k$ we test $u_k^{\pm}$ to equation \eqref{eq:r+n} to obtain
	\begin{equation}\label{eq:pm}
	I(u_k^{\pm})=\frac{1}{2(n-1)}||u_k^{\pm}||_D^2=\frac{1}{2(n-1)}||u_k^{\pm}||_{L^{2^*}(\partial \R_+^n)}^{2^*}\geqslant \frac{1}{2(n-1)}S_n^{n-1}.
	\end{equation}
	By the assumption of the proposition we obtain
	\begin{equation*}
	I(u_k)=I(u_k^+)+I(u_k^-)\rightarrow \frac{1}{n-1}S_n^{n-1}.
	\end{equation*}
	So we must have 
	$$I(u_k^{\pm})\rightarrow \frac{1}{2(n-1)}S_n^{n-1}.$$
	Again by equation \eqref{eq:pm} we have
	$$\frac{||u_k^{\pm}||_D^2}{||u_k^{\pm}||_{L^{2^*}(\partial \R_+^n)}^{2}}
=\big(2(n-1)I(u_k^{\pm})\big)^{\frac{1}{n-1}}\rightarrow S_n.$$
	Thus it follows from classical results by Escobar~\cite{esc1} and Lions~\cite{lio} that there exist $\varepsilon_k^{{(1)}}, \varepsilon_k^{{(2)}}>0$, $y_k^{(1)}, y_k^{(2)}\in \R^{n-1}$ such that 
	$$||u_k^+-U(\varepsilon_k^{(1)},y_k^{(1)})||_D\rightarrow 0,$$
	$$||u_k^--U(\varepsilon_k^{(2)},y_k^{(2)})||_D\rightarrow 0.$$
	As a result,
	$$||u_k-U(\varepsilon_k^{(1)},y_k^{(1)})+U(\varepsilon_k^{(2)},y_k^{(2)})||_D\rightarrow 0.$$
	From Lemma~\ref{lemma:trans} there exists a transformation $T_k\in \Gamma$ such that $T_kU(\varepsilon_k^{(1)},y_k^{(1)})=U(1,z_k)$ and $T_kU(\varepsilon_k^{(2)},y_k^{(2)})=U(1,-z_k)$ for some $z_k=(z_{k,1},0,\cdot\cdot\cdot,0)$ where $z_{k,1}>0$. Therefore
	$$||T_ku_k-U(1,z_k)+U(1,-z_k)||_D\rightarrow 0.$$ Using Lemma~\ref{lemma:inftybound} we have 
	$$||T_ku_k-U(1,z_k)+U(1,-z_k)||_{L^{\infty}(\R_+^n)}\rightarrow 0.$$
	Now let's prove that $z_{k,1}\rightarrow +\infty$. Suppose not, then up to a subsequence we assume $z_{k,1}\rightarrow z_1\geqslant 0$. If $z_1=0$, then $T_ku_k\rightarrow0$ in $D^{1,2}(\R_+^n)$ which is impossible since $I(T_ku_k)\rightarrow \frac{1}{n-1}S_n^{n-1}$. If $z_1>0$, then $u=U(1,z_1)-U(1,-z_1)$ is a sign-changing solution to equation \eqref{eq:r+n} with $I(u)= \frac{1}{n-1}S_n^{n-1}$. This contradicts Proposition \ref{prop:lb}. Therefore we must have  $z_{k,1}\rightarrow +\infty$.
\end{proof}

Now we are ready to prove Theorem~\ref{thm:lb}.
\begin{proof}[Proof of Theorem~\ref{thm:lb}]
	Assume by contradiction that there exists a sequence of solutions $\{u_k\}_{k\in \N}\subset D^{1,2}(\R_+^n)$ such that $I(u_k)\rightarrow \frac{1}{n-1}S_n^{n-1}$. Then by Proposition~\ref{prop:approximation}, there exists a sequence of transformations $T_k\in \Gamma$ such that
	$$||T_ku_k-U(1,z_k)+U(1,-z_k)||_D\rightarrow 0,$$ and 
	\begin{equation}\label{eq:infty approximation}
	||T_ku_k-U(1,z_k)+U(1,-z_k)||_{L^{\infty}(\R_+^n)}\rightarrow 0,
	\end{equation}
	where $z_k=(z_{k,1},0,\cdot\cdot\cdot, 0)\in \R^{n-1}$, $z_{k,1}\rightarrow +\infty$. Let $v_k=T_ku_k$. Then using the invariance properties of equation \eqref{eq:r+n} we know $\{v_k\}_{k\in \N}$ are sign-changing solutions to \eqref{eq:r+n} as well. Choose $R>0$ large enough such that
	$$\(\int_{\partial \R_+^n \setminus B_R(0)}U(1,0)^{2^*}\)^{\frac{1}{2^*}}<\frac{L^{\frac{1}{2^*}}}{2},$$
	where $L$ is defined in \eqref{eq:L}. Since $||v_k-U(1,z_k)+U(1,-z_k)||_{L^{2^*}(\partial \R_+^n)}\rightarrow 0$ and $z_{k,1}\rightarrow +\infty$, by Sobolev inequality \eqref{ineq:sob}, we know
	 $$\int_{\partial \R_+^n \setminus \{ B_R(z_k)\cup r_0(B_R(z_k)\}} |v_k|^{2^*}<L,$$ 
	 for $k$ large enough. Moreover, by \eqref{eq:infty approximation} we have $v_k>0$ in $B_R(z_k)\cap \partial \R_+^n$, $v_k<0$ in $r_0(B_R(z_k))\cap \partial \R_+^n$ and $\inf_{\partial'H}v_k>\inf_{\partial \R_+^n\setminus\partial'H}v_k$ for $k$ large enough.
	However it follows from Proposition~\ref{prop:nonexistence} that such solutions $\{v_k\}_{k\in \N}$ of \eqref{eq:r+n} do not exist. This gives a contradiction and finishes the proof of Theorem~\ref{thm:lb}.

\end{proof}

\bigskip\noindent
\textsc{S\'ergio Almaraz\\
	Instituto de Matem\'atica e Estat\' istica, 
	Universidade Federal Fluminense\\
	Rua Prof. Marcos Waldemar de Freitas S/N,
	Niter\'oi, RJ,  24.210-201, Brazil.}\\
e-mail: {\bf{sergio.m.almaraz@gmail.com}}

\bigskip\noindent
\textsc{Shaodong Wang\\
	School of Mathematical Sciences, Shanghai Jiao Tong University \\
	800 Dongchuan Road,
	Shanghai, China.}\\
e-mail: {\bf{shaodong.wang@sjtu.edu.cn}}

\end{document}